\theoremstyle{plain} 
\newtheorem{theorem}{\bf Theorem}[section]
\newtheorem{lemma}[theorem]{\bf Lemma}
\newtheorem{proposition}[theorem]{\bf Proposition}
\theoremstyle{definition} 
\newtheorem{definition}[theorem]{\bf Definition}
\newtheorem{remark}[theorem]{\bf Remark}
\title[The 4-intersection unprojection format]{The 4-intersection unprojection format}
\author[Vasiliki~Petrotou]{Vasiliki~Petrotou}
\address{VASILIKI~PETROTOU\\EINSTEIN INSTITUTE OF
MATHEMATICS, HEBREW UNIVERSITY OF JERUSALEM, 91904 JERUSALEM, ISRAEL}
\email{vasiliki.petrotou@mail.huji.ac.il}
\begin{document}

\subjclass[2010]{
Primary 14M05, 14J45; Secondary 13H10, 14E99.
}
\keywords{ Gorenstein rings, Fano 3-folds, Birational Geometry, Unprojection.
}

\begin{abstract}
Unprojection theory is a philosophy due to Miles Reid,  which becomes a useful tool in algebraic geometry 
for the construction and the study of new interesting geometric objects such as algebraic surfaces 
and 3-folds. In this present work we introduce a new format of unprojection, which we
call the 4-intersection format.  It is specified by a codimension $2$ complete intersection ideal 
$I$  which is contained in four codimension $3$ complete intersection ideals $J_1, J_2, J_3, J_4$
and leads to the construction of codimension $6$ Gorenstein rings. As an application, 
we construct three families of codimension $6$  Fano $3$-folds embedded in weighted projective 
space which correspond to the entries with identifier numbers $29376$, $9176$ and $24198$ respectively in 
the Graded Ring Database. 
\end{abstract}

\maketitle

\section{Introduction} 

\label{sec!introduction}

The theory of unprojection focuses on constructing and analyzing commutative rings in terms of simpler ones. 
It also provides, in an intrinsic way, the relation between the commutative rings associated to certain constructions which appear often 
in algebraic geometry such as for example the Castelnuovo blow-down of a $-1$-curve on a surface \cite{ P1, P2, PR, R1}
and also in algebraic combinatorics \cite{BP2}.

Kustin and Miller, motivated by the question of the structure
of  codimension $4$ Gorenstein rings \cite{KM1,KM2,KM3,KM4,KM5},  introduced \cite{KM}  a method that constructs Gorenstein 
rings with more complicated structure than the initial data, which is now known as 
Kustin-Miller  (or Type I) unprojection. In this unprojection type the codimension increases by $1$.
 
Some years later, around $1995$,  Reid reinterpreted and generalised 
the construction of Kustin and Miller and formulated the main principles of unprojection theory \cite{R1}. 
His motivation was to provide an algebraic language useful for the study of birational geometry.

Since then,  many papers on foundational questions of unprojection theory ~\cite{AL, CL1, CL2, LP, NP1, NP2, P2, P3, P4, PR, PV, TR} have appeared.

We now summarise Reid's formulation of unprojection.
Assume that $J\subset R$ is a codimension $1$ ideal with $R$, $R/J$ being Gorenstein. 
Denote by $i\colon J\rightarrow R$ the inclusion map. Then there exists 
an $R$-module homomorphism  $\phi \colon J\rightarrow R$ such that the 
$R$-module  $\operatorname{Hom}_R(J, R)$ is generated by the set $\{i, \phi\}$.
Using $\phi$, Reid defined the new unprojection ring, see \cite[ Definition~ 3.1]{PV}. 
Papadakis and Reid  proved that the ring of unprojection is Gorenstein (\cite[Theorem~ 1.5]{PR}). 
We refer the reader to \cite[ Example~ 2.3]{PR} for the simplest example of Kustin-Miller unprojection. 

Unprojection theory has found many applications. In birational geometry,  in the study of Fano 3-folds, 
algebraic surfaces of general type and Mori-flips ~\cite{BGR,BKR,CM,CPR,NP1}. 
Moreover, in the construction of some interesting geometric objects such as K3 surfaces, Fano $3$-folds 
and Calabi-Yau $3$-folds of high codimension ~\cite{BG, BKR, NP2, PV}. There are also interesting applications 
in algebraic combinatorics  \cite{BP1, BP2, BP3, BP4}.

Sometimes, especially for the construction of geometric objects of high codimension,  it is necessary to 
perfom not only one but a series of unprojections. 
Neves and Papadakis \cite{NP2} developed a theory which is based on the repeated use of Kustin-Miller 
unprojection and as a result produces Gorenstein rings of high codimension as through the process 
a new unprojection variable is added. This theory is called parallel Kustin-Miller unprojection. A brief summary 
of the main aspects of the theory is given in \cite[ Subsection~ 3.2]{PV}.

In this paper we develop a new format of unprojection, which we call the $4$-intersection format. 
It is specified by a codimension $2$  complete intersection ideal $I$ with the property that it is contained 
in four codimension $3$ complete intersection ideals $J_1,\dots,J_4$. Using this data 
we construct, by parallel Kustin-Miller unprojection, a codimension~$6$ Gorenstein ring. As an application we construct 
three families of Fano $3$-folds of codimension~$6$ embedded in weighted projective space which 
correspond to the entries with ID: 29376, ID: 9176,  and ID: 24198 in the Graded
Ring Database ~\cite{ABR,BR,BS1,BS2,BS3}.

In Section $2$ we give some preliminary notions and results that we need in this paper. In 
Section $3$ we introduce the $4$-intersection unprojection format. 
In Subsection \ref{subs!mainres2to6}, we give a specific example of $4$-intersection format, 
and we construct, using parallel unprojection, a codimension $6$ Gorenstein ring.

In Section $4$ we give some applications.  Subsections \ref{constr!29376}, \ref{constr2!9176} and  \ref{constr3!24198}
contain  three specific 
$4$-dimensional quotients of the ring studied in Section $3$. 
 We check, using partially the computer algebra systems  Macaulay2 \cite{GS}  and Singular \cite{GPS01},
 that the geometric objects defined by them
correspond to the above mentioned families of the Fano 3-folds.

\section{Preliminaries}
We start by recalling some notions and results that are required for the rest of the paper.
Denote by $k=\mathbb{C}$ the field of complex numbers. 

\begin{definition}\label{def!codim} 
Let $R$ be a Noetherian ring and  $I\subset R$ an ideal. We define the \textit{codimension} 
of $I$ in $R$, denoted  by  $\operatorname{codim} I$, as follows:
\[
       \operatorname{codim}  I =  \dim R - \dim R/I.
\]
\end{definition}

\begin{theorem}\label{thm!krullprincipal} (Krull's Principal Ideal Theorem)
Assume that $R$ is a local Noetherian ring and $I$ is an ideal of $R$ which 
is generated by $n$ elements. Then, $\operatorname{codim} I \leq n$.
\end{theorem}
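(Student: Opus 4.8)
The plan is to unwind the given definition of codimension and reduce the statement to a one-element dimension estimate. Since $\operatorname{codim} I = \dim R - \dim R/I$, the inequality $\operatorname{codim} I \leq n$ is equivalent to the lower bound
\[
\dim R/I \geq \dim R - n .
\]
We may assume $I \neq R$, since one takes $I$ to be a proper ideal so that $R/I \neq 0$ and the codimension is meaningful. Writing $I = (a_1, \dots, a_n)$, I would prove the bound by induction on $n$. The base case $n = 0$ gives $I = (0)$ and $\dim R/I = \dim R$, so equality holds.

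For the inductive step, set $I' = (a_1, \dots, a_{n-1})$ and observe that $R/I \cong (R/I')/(\bar a_n)$, where $\bar a_n$ denotes the image of $a_n$. The ring $S := R/I'$ is again local Noetherian, so the whole argument reduces to the following single-element lemma: for a local Noetherian ring $S$ and any $a \in S$, one has $\dim S/(a) \geq \dim S - 1$. Granting this, the inductive hypothesis gives $\dim (R/I') \geq \dim R - (n-1)$, and applying the lemma to $S$ and $\bar a_n$ yields $\dim R/I = \dim S/(\bar a_n) \geq \dim S - 1 \geq \dim R - n$, as required.

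To prove the single-element lemma I would invoke the dimension theorem for local Noetherian rings, which identifies the Krull dimension $\dim S$ with the least number of elements of the maximal ideal $\mathfrak m$ that generate an $\mathfrak m$-primary ideal (a system of parameters). Set $t = \dim S/(a)$ and choose images $\bar y_1, \dots, \bar y_t \in \mathfrak m/(a)$ generating an $(\mathfrak m/(a))$-primary ideal of $S/(a)$. Lifting to $y_1, \dots, y_t \in \mathfrak m$, the ideal $(y_1, \dots, y_t, a)$ has radical equal to $\mathfrak m$, hence is $\mathfrak m$-primary and generated by $t+1$ elements. By the dimension theorem this forces $\dim S \leq t + 1$, that is, $\dim S/(a) = t \geq \dim S - 1$.

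The routine bookkeeping (the induction, the isomorphism $R/I \cong (R/I')/(\bar a_n)$, and the passage between an ideal and its radical) is straightforward; the genuine content, and therefore the main obstacle, is the dimension theorem itself, namely the equivalence of the Krull dimension with the minimal number of generators of an $\mathfrak m$-primary ideal. This is established via Hilbert--Samuel functions, comparing the Krull dimension, the degree of the Hilbert--Samuel polynomial, and the Chevalley dimension. If one is permitted to cite it as standard, the remainder of the proof is short.
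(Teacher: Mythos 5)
Your proof is correct. Note, however, that the paper does not prove this theorem at all: it is stated as a classical result and the reader is referred to Bruns--Herzog for a proof, so there is no argument in the paper to compare yours against. What you have written is the standard textbook derivation of the $n$-generator bound from the dimension theorem: the reduction of $\operatorname{codim} I \leq n$ to $\dim R/I \geq \dim R - n$, the induction that peels off one generator at a time via $R/I \cong (R/I')/(\bar a_n)$, and the one-element lemma proved by lifting a system of parameters of $S/(a)$ and adjoining $a$ to get an $\mathfrak m$-primary ideal with $t+1$ generators. All of these steps are sound, including the small but necessary caveat that $I$ be proper so that $\dim R/I$ makes sense. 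You are right that the entire weight of the argument rests on the dimension theorem (equality of Krull dimension with the minimal number of generators of an $\mathfrak m$-primary ideal); citing that as standard is legitimate here, since it is exactly what the reference the paper points to establishes, and reproving it via Hilbert--Samuel functions would be out of proportion to the role this theorem plays in the paper (a tool for the upper bounds $\operatorname{codim} I \leq 2$ in Section 2 and Proposition \ref{prop!codim2}).
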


For a proof of Theorem \ref{thm!krullprincipal}, see for example \cite[p.~414]{BH}.

In the present work,  we will call an ideal $I$ of a polynomial 
ring $k[x_1,\dots,x_n]$ over a field $k$ a \textit{complete intersection ideal} 
if $I$ can be generated by  $\operatorname{codim} I$ elements.
We refer to  \cite [Section~2.3]{BH} for more details about this notion.

\begin{definition} 
A Noetherian local ring $R$ is called  \textit{Gorenstein} if it has finite injective 
dimension as an $R$-module. More generally, a Noetherian ring $R$ is called Gorenstein  
if for every maximal ideal  $\mathfrak m$ of $R$ the localization $R_{\mathfrak m}$ 
is Gorenstein.
\end{definition}

\begin{definition} 
An ideal $I$ of a Gorenstein ring $R$ is called  \textit{Gorenstein} if the quotient ring $R/I$ is Gorenstein.
\end{definition}

\begin{theorem} (Serre) \label{thm!serbusc}
Let $R=k[x_1,\dots, x_n] / I$  be the polynomial ring in $n$ variables of positive degree divided by a homogeneous ideal $I$.
 If  $\text{codim I}=  1$ or  $2$  then
\begin{center}
 $R$ is Gorenstein $\Leftrightarrow$  $I$ is a complete intersection.
\end{center}
\end{theorem}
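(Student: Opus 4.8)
The plan is to prove the two implications separately and to reduce everything to the study of the minimal graded free resolution of $R$ over the polynomial ring $S=k[x_1,\dots,x_n]$. Since $R$ is graded, it suffices to test the Gorenstein property at the irrelevant maximal ideal $\mathfrak m=(x_1,\dots,x_n)$, where $S_{\mathfrak m}$ is a regular local ring, so I may freely use the homological characterizations of Cohen--Macaulayness and the Gorenstein property available in that setting, as developed in \cite{BH}.

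For the implication \q{complete intersection $\Rightarrow$ Gorenstein}, which in fact holds in every codimension, I would argue as follows. Write $c=\operatorname{codim}I$ and suppose $I=(f_1,\dots,f_c)$. By Krull's Principal Ideal Theorem (Theorem~\ref{thm!krullprincipal}) and the Cohen--Macaulayness of $S$, the generators $f_1,\dots,f_c$ form a regular sequence, so the Koszul complex $K_\bullet(f_1,\dots,f_c)$ is a minimal free resolution of $R$ over $S$. Because the Koszul complex is self-dual with top term $K_c\cong S$ of rank one, the canonical module of $R$ is free of rank one, that is, $R$ is Gorenstein.

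For the converse I would exploit the numerical consequences of the self-duality of the resolution of a Gorenstein quotient, now using the hypothesis $c\in\{1,2\}$. Since $R$ is Gorenstein it is Cohen--Macaulay, so by the Auslander--Buchsbaum formula $\operatorname{pd}_S R=c$, and the minimal graded free resolution has the shape
\[
0\to F_c\to\cdots\to F_1\to F_0\to R\to 0,\qquad F_0=S,
\]
with Betti numbers $b_i=\operatorname{rank}F_i$. The Gorenstein hypothesis is equivalent to the Cohen--Macaulay type of $R$ being $1$, and this type equals $b_c$; hence $F_c\cong S$. Moreover, as $\dim R<\dim S$ the module $R$ is torsion over $S$, so the alternating sum of ranks vanishes, $\sum_i(-1)^i b_i=0$. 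In the case $c=1$ the resolution $0\to F_1\to S\to R\to 0$ forces $b_1=1$, so $I=(f)$ is principal and, having one generator in codimension one, is a complete intersection. In the case $c=2$ the resolution $0\to S\to F_1\to S\to R\to 0$ together with $b_0-b_1+b_2=0$ gives $b_1=2$; thus $I$ is minimally generated by two elements, which then form a regular sequence since $\operatorname{codim}I=2$, so $I$ is again a complete intersection.

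The step I expect to be the main obstacle is the identification of the Gorenstein property with the statement \q{the last Betti number equals one.} I would take this from the theory of canonical modules and Cohen--Macaulay type in \cite{BH}: dualizing the minimal resolution computes $\operatorname{Ext}^c_S(R,S)$, a shift of the canonical module $\omega_R$, whose minimal number of generators is exactly $b_c$, and $R$ is Gorenstein precisely when this module is cyclic, i.e. $b_c=1$. Once this is in hand, everything else is bookkeeping with ranks together with Krull's theorem, and the restriction to codimension $1$ and $2$ enters only to pin down $b_1$ uniquely from the self-dual shape of the short resolution.
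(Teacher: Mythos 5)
The paper does not prove this theorem itself — it only cites \cite[Corollary~21.20]{E} — and your argument is correct and is essentially the proof given in that standard reference: the forward direction via the self-dual Koszul resolution of a complete intersection, and the converse via Auslander--Buchsbaum, the identification of the Cohen--Macaulay type with the last Betti number $b_c$, and rank bookkeeping ($\sum_i(-1)^ib_i=0$) to force $b_1=c$ for $c\in\{1,2\}$. No gaps beyond the standard facts you explicitly flag (type $=b_c$, and that $c$ generators of a codimension-$c$ ideal in a Cohen--Macaulay ring form a regular sequence), both of which are indeed in \cite{BH}.
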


For a proof of Theorem \ref{thm!serbusc}, see for example  \cite[ Corollary~ 21.20]{E}.

\begin{definition} 
Let $R$ be a Gorentein local ring and $J\subset R$ a codimension $1$ ideal such that the quotient ring $R/J$ be Gorenstein. Under these assumptions, the $R$-module  $\operatorname{Hom}_R(J,R)$ is generated  by the inclusion map $i\colon J\rightarrow  R$ and an extra homomorphism $\phi \colon J\rightarrow  R$,  (\cite[ Lemma ~ 1.1]{PR}). Denote by $T$ a new unprojection variable. We call  \textit{Kustin-Miller unprojection ring},  $\operatorname{ Unpr}(J,R)$, of the pair $J\subset R$ the quotient 
\[ \operatorname{Unpr}(J,R) =\frac{R[T]}{(Tr-\phi(r): r \in J)}.\]
\end{definition}

\begin{theorem}(\cite[ Theorem~ 1.5]{PR})
The Kustin-Miller unprojection  ring $\operatorname{Unpr}(J,R)$  is Gorenstein.
\end{theorem}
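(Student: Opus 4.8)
The plan is to establish Gorenstein-ness through the homological characterization: a Noetherian local ring is Gorenstein precisely when it is Cohen--Macaulay and its canonical module is free of rank one (type one). Since the Gorenstein property is defined locally (via localizations at maximal ideals), I would first reduce to the local situation and fix the ambient data: write $R = P/I_R$ for a regular local (or polynomial) ring $P$, so that $R$ is Gorenstein of codimension $c$ in $P$, and let $D = R/J$, which is Gorenstein of codimension $c+1$ since $J$ has codimension $1$ in $R$. Because $R$ is Gorenstein we have $\omega_R \cong R$, and applying $\operatorname{Hom}_R(-,R)$ to the short exact sequence $0 \to J \to R \to D \to 0$ yields the fundamental identification
\[
0 \to R \xrightarrow{\ i\ } \operatorname{Hom}_R(J,R) \to \omega_D \to 0,
\]
where $\omega_D \cong \operatorname{Ext}^1_R(D,R)$ is the canonical module of $D$. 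Since $D$ is Gorenstein, $\omega_D$ is cyclic, and a generator lifts to the homomorphism $\phi$; this is exactly the content of the preceding definition and explains why $\{i,\phi\}$ generate $\operatorname{Hom}_R(J,R)$.

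The core of the argument is to produce a free resolution of $S := \operatorname{Unpr}(J,R)$ over $P[T]$ and to show it is self-dual. Following the original idea of Kustin and Miller, I would take the minimal free resolutions of $R$ and of $D$ over $P$; both are self-dual (symmetric) by Gorenstein-ness, as in the codimension $1$ and $2$ situations governed by Theorem \ref{thm!serbusc} and their higher-codimension analogues. The homomorphism $\phi$, together with the inclusion $I_R \subset I_D$, induces a comparison morphism between these two complexes. The relations $Tr - \phi(r)$ defining $S$ are then encoded by gluing the two resolutions along this comparison map, producing an explicit complex $\mathbb{F}_\bullet$ of free $P[T]$-modules whose cokernel is $S$.

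I would then prove that $\mathbb{F}_\bullet$ is acyclic, i.e. a genuine resolution of $S$. The key preliminary step is to check that $S$ has the expected codimension $c+1$ in $P[T]$, so that $\dim S = \dim R$ (the new variable $T$ is added but the relations restore the dimension); granting this, acyclicity follows from the Buchsbaum--Eisenbud criterion, verifying that the ranks and depths of the ideals of minors of the maps in $\mathbb{F}_\bullet$ satisfy the required inequalities. Since the resolution was built by splicing two self-dual complexes along $\phi$, one checks that $\mathbb{F}_\bullet$ is itself self-dual; in particular its last nonzero free module has rank one. This forces $S$ to be Cohen--Macaulay of type one, hence Gorenstein, completing the proof.

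The main obstacle I expect is the verification of acyclicity and, intertwined with it, the precise matching of the comparison maps with their duals that makes $\mathbb{F}_\bullet$ self-dual. Keeping track of the twists and signs so that the spliced complex is genuinely symmetric, rather than merely self-dual up to a shift, is the delicate bookkeeping at the heart of the Kustin--Miller construction. This is where an intrinsic, resolution-free reformulation, computing $\omega_S$ directly from $\omega_R \cong R$ and $\omega_D$ via the exact sequence above, may be cleaner; in either route the essential input is that the cyclic generator of $\omega_D$ is exactly the class of $\phi$.
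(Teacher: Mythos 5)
The paper does not prove this statement: it is quoted from \cite{PR} (Theorem~1.5 there) and used as a black box, so the only meaningful comparison is with the proof in the cited source. Measured against that, your route is genuinely different. Papadakis and Reid's paper is called ``Kustin--Miller unprojection \emph{without} complexes'' precisely because it avoids the resolution-splicing you propose: after reducing to the local case and establishing the exact sequence $0 \to R \xrightarrow{\ i\ } \operatorname{Hom}_R(J,R) \to \omega_{R/J} \to 0$ (your first paragraph, which is essentially their Lemma~1.1), they work intrinsically with the kernel of $R[T] \to \operatorname{Unpr}(J,R)$ and with canonical modules, proving directly that the unprojection ring is Cohen--Macaulay of dimension $\dim R$ with $\omega_S \cong S$; no free resolution is ever written down. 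What you outline instead is the original Kustin--Miller strategy \cite{KM}, carried out in general in the companion paper \cite{P2}: splice the self-dual resolutions of $R$ and $R/J$ over the ambient regular ring along comparison maps induced by $\phi$, prove acyclicity, and conclude from self-duality of the spliced complex. Both routes are legitimate; the resolution route additionally yields the explicit resolution of the unprojection ring (which is what makes computations such as Proposition~\ref{prop!gradres1} feasible), while the intrinsic route is shorter and dodges the sign and twist bookkeeping you rightly worry about.

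As a self-contained argument, however, your text stops exactly where the difficulty begins. The two claims you defer --- acyclicity of $\mathbb{F}_\bullet$ and its self-duality --- are the entire content of the theorem. For acyclicity, the Buchsbaum--Eisenbud criterion needs lower bounds on the grades of the ideals of maximal minors of every differential of $\mathbb{F}_\bullet$; those differentials involve the comparison maps between the two resolutions, over which you have no a priori control, and this is also where you would have to prove, rather than assume, that $\operatorname{Unpr}(J,R)$ has codimension $c+1$ in $P[T]$. For self-duality, splicing two self-dual complexes does not automatically give a self-dual complex: one must choose the chain map $A_\bullet \to B_\bullet$ and its ``adjoint'' compatibly, up to homotopy, with the duality isomorphisms of the two resolutions. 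Naming these obstacles, as you do in your last paragraph, is not the same as overcoming them; to turn the sketch into a proof you would either have to carry out the construction of \cite{P2} in detail or switch to the resolution-free argument of \cite{PR}.
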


More discussion about the motivation of study and basic examples of  Kustin-Miller unprojection are contained in \cite{ PR, PV, R1}.

For the main principles of parallel Kustin-Miller unprojection we  refer the reader to \cite{ NP2, PV}.

\subsection{Unprojection of a codimension~2 complete intersection inside a codimension~3 complete intersection} \label{fund!calc11}
In this subsection we specify a codimension~$2$ complete intersection ideal $I$ and a codimension~$3$ complete intersection $J$ such that $I\subset J$. Following 
\cite[Section~4]{P2}, we give the explicit description of the unprojection ring 
$\operatorname{Unpr}(J/I,R/I)$ of the pair $J/I\subset R/I$. 

Let $R=k[a_i, b_i, x_j]$, where $1\leq i\leq 3$ and $j\in \{1, 3, 5\}$, be the standard 
graded polynomial ring in $9$ variables over a field $k$.  We set 
\[
    f_1 =  a_1x_1+a_2x_3+a_3x_5,  \quad  \quad   f_2 = b_1x_1+b_2x_3+b_3x_5,
\]
and consider the  ideals 
\[
         I=(f_1, f_2),  \quad \quad  J=(x_1, x_3, x_5)
\] 
of $R$. We denote by $A$ the  $2\times 3$ matrix
\[ 
A =  \begin{pmatrix} 
        a_1 &  a_2 &  a_3\\
         b_1 &  b_2  & b_3
   \end{pmatrix}
\]
and, for $1 \leq i \leq 3$, by $A_i$ the $2\times 2$ submatrix of $A$  obtained by removing the $i$-th column of $A$.

\begin{proposition}
The ideal $I$ is a homogeneous codimension $2$  Gorenstein ideal of $R$ and the ideal $J$ is
a homogeneous codimension $3$  Gorenstein ideal. Moreover, $I$ is a subset of $J$.
\end{proposition}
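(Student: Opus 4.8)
The plan is to treat the three claims separately, using Krull's principal ideal theorem (Theorem~\ref{thm!krullprincipal}) and Serre's criterion (Theorem~\ref{thm!serbusc}) as the main tools; the only real work is the codimension computation for $I$.

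The containment and the homogeneity are immediate. Both $f_1$ and $f_2$ are $R$-linear combinations of $x_1,x_3,x_5$, so $f_1,f_2\in J=(x_1,x_3,x_5)$ and hence $I\subseteq J$. Moreover $x_1,x_3,x_5$ are homogeneous of degree $1$ and $f_1,f_2$ are homogeneous of degree $2$ in the standard grading, so $I$ and $J$ are homogeneous ideals.

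For $J$, I would identify the quotient $R/J$ with the polynomial ring $k[a_1,a_2,a_3,b_1,b_2,b_3]$, whence $\dim R/J=6$ and $\operatorname{codim}J=\dim R-\dim R/J=9-6=3$. Since the three distinct variables $x_1,x_3,x_5$ form a regular sequence, $J$ is a codimension~$3$ complete intersection, and being the quotient of a polynomial ring by a regular sequence it is Gorenstein.

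For $I$, Krull's principal ideal theorem applied to the two generators gives $\operatorname{codim}I\le 2$, and the point is to prove the reverse inequality by showing that $f_1,f_2$ is a regular sequence in the domain $R$. Here $f_1\ne 0$ is automatically a nonzerodivisor, so it remains to see that $f_2$ is a nonzerodivisor modulo $(f_1)$. I would check this by verifying that $f_1$ is irreducible---regarding it as a primitive, degree-one polynomial in $x_1$ over $k[a_1,a_2,a_3,b_1,b_2,b_3,x_3,x_5]$ and applying Gauss's lemma---so that $(f_1)$ is prime, together with the fact that $f_2\notin(f_1)$ (the two are homogeneous of equal degree but are not scalar multiples, since $f_2$ involves the $b_i$ and $f_1$ the $a_i$). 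A regular sequence of length $2$ inside $I$ forces $\operatorname{codim}I\ge 2$, hence $\operatorname{codim}I=2$; thus $I$ is generated by $\operatorname{codim}I=2$ elements and is a complete intersection, and Serre's criterion then yields that $I$ is Gorenstein. The main obstacle is exactly this step: one must rule out that the two quadrics fail to cut down the dimension as expected. The regular-sequence argument via irreducibility of $f_1$ is the cleanest route; a more computational alternative would be to bound $\dim V(I)$ directly by analyzing the fibers of the projection $V(I)\to\mathbb{A}^6$ onto the $(a_i,b_i)$-coordinates and separately estimating the loci where the matrix $A$ has rank $\le 1$.
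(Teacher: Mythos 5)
Your proof is correct, but the key step---showing $\operatorname{codim} I\ge 2$---is done by a genuinely different method than in the paper. The paper degenerates to initial terms: it adjoins the auxiliary element $f_3=-b_1f_1+a_1f_2\in I$, takes the lex leading terms $a_1x_1,\,b_1x_1,\,a_1b_2x_3$ of $f_1,f_2,f_3$, and bounds the codimension of $I$ from below by the codimension of the resulting monomial-type ideal $L$, computed by decomposing $V(L)$ into linear components. (Note that the extra element $f_3$ is essential there: the leading terms of $f_1,f_2$ alone generate an ideal of codimension only $1$.) You instead show directly that $f_1,f_2$ is a regular sequence: $f_1$ is irreducible by primitivity and Gauss's lemma, so $(f_1)$ is prime, and $f_2\notin(f_1)$ by a degree/scalar-multiple comparison, so $f_2$ is a nonzerodivisor on $R/(f_1)$; a length-two regular sequence forces height (equivalently, in this affine domain, codimension) at least $2$. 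Your route is cleaner and more conceptual, avoids Gr\"obner computations, and exhibits $I$ as a complete intersection outright rather than inferring it afterwards from Serre's criterion; the paper's initial-ideal technique is more mechanical but has the advantage of scaling to the later, messier computation of Proposition~\ref{prop!codim2}, where the same degeneration trick is reused. The remaining parts (homogeneity, $I\subseteq J$ via the matrix identity, and $\operatorname{codim} J=3$ with $R/J$ a polynomial ring, hence Gorenstein) coincide with the paper's argument.
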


\begin{proof}
We first prove that $\operatorname{codim} I = 2$. The ideal $I$ is generated by two 
homogeneous polynomials of $R$ of degree $2$. Hence, by Theorem \ref{thm!krullprincipal}
$\operatorname{codim} I\leq 2$. To prove the claim it is enough to show that $\operatorname{codim} I\geq 2$. 
We set  $f_3=-b_1f_1+a_1f_2$.  Let $>$ be the lexicographic order 
on $R$ with 
\[     
              a_1>\dots> a_3> b_1>\dots>b_3>x_1>\dots> x_3.
\]
We denote by $Q$ the initial ideal of $I$ with respect  $>$. It is well-known
that $\operatorname{codim} I = \operatorname{codim} Q$.

We set
\[
          L = ( a_1x_1,   b_1x_1,   a_1b_2x_3).
\]
Since the initial term of $f_1$ is $a_1x_1$,   the initial term of $f_2$ is $b_1x_1$ 
and the initial term of    $f_3$ is  $a_1b_2x_3$ we have $ L \subset Q$,
hence $\operatorname{codim}  L \leq \operatorname{codim} Q$.

We consider the affine variety $X=V(L) \subset \mathbb{A}^{9}$. It holds that 
\[ 
      X= V(x_1,x_3)\cup V(b_2, x_1)\cup V(a_1, b_1)\cup V(a_1, x_1),
\]
hence $\dim X = 9 - 2 = 7$.
Using that
\[
         \dim \ R/L=\dim \ X,
\]
it follows that $\operatorname{codim} L = 2$.   Hence
 $\operatorname{codim} I\geq 2$. Therefore, by Theorem  \ref{thm!serbusc} the ideal $I$ is Gorenstein.

We now prove that  $\operatorname{codim} J = 3$. According to the Third Isomorphism Theorem of rings 
\[
 R/J  \cong k[a_1 ,a_2, a_3, b_1, b_2, b_3]
\]
So, $\dim \ R/J= 6$. Hence,
\[
\operatorname{codim} \  J =  \dim \ R - \dim \ R/J = 3. 
\]
By the last isomorphism, the ideal $J$ is Gorenstein.
By the equality of matrices
\[
 \begin{pmatrix} 
f_{1}  \  f_{2} 
\end{pmatrix} =   A  \begin{pmatrix} 
x_{1} \\ x_{3} \\  x_{5} 
\end{pmatrix}
\]
it follows that $I\subset J$. 
\end{proof}

We set, for $1 \leq i \leq 3$, $h_i$ to be the determinant of the matrix $A_i$. Denote by 
\[\phi\colon J/I\rightarrow R/I\]
the map such that
\begin{center}
$\phi(x_1 + I)= h_1+ I, \, \, \, \,   \phi(x_3 + I)= -h_2+ I, \, \, \, \, \phi(x_5+ I)= h_3+ I$.
\end{center}
By \cite[Theorem~4.3]{P2}, $\operatorname{Hom}_{R/I}(J/I,R/I)$ is generated as $R/I$-module by the inclusion map $i$ and $\phi$. 
As a corollary,
\[ \operatorname{Unpr}(J/I,R/I) =\frac{R[T]}{I+(Tx_1-h_1, Tx_3-(-h_2), Tx_5-h_3)}.\]

\section{The $4$-intersection unprojection format} \label{sec!4intformat}

In this section we introduce the notion of  $4$-intersection unprojection format.

\begin{definition} Assume that $J_1,\dots,J_4$ are four codimension $3$ complete 
intersection ideals and $I$ is a codimension $2$ complete intersection ideal.  
We say that $I$ is a $4$-intersection ideal in $J_1, \dots , J_4$  if  
$I \subset J_t$ for all $1 \leq t \leq 4$.
\end{definition}

An important question is how to explicitly construct $I$ and $J_t$ such that 
$I$ is a $4$-intersection ideal in $J_1, \dots , J_4$.
In Subsection~\ref{subs!mainres2to6} we present such a construction.

\subsection{A specific $4$-intersection unprojection format} \label{subs!mainres2to6}

In the present subsection we specify the following: a codimension  $2$ complete intersection ideal $I$
and  four codimension~$3$ complete intersection ideals  $J_1,\dots ,J_4$ such that
$I$ is a $4$-intersection ideal in $J_1, \dots , J_4$.  Using this configuration 
as initial data, we construct,  by parallel Kustin-Miller unprojection~ \cite{NP2}, a 
codimension $6$ Gorenstein ring.

Assume that $k$ is a field. 
We consider the standard graded polynomial ring $R=~k[c_i, x_i]$, where $1\leq i \leq 6$. We set
\[
    f=c_1x_1x_2+c_2x_3x_4+c_3x_5x_6, \quad \quad  g=c_4x_1x_2+c_5x_3x_4+c_6x_5x_6, 
\]
$I=(f,g)$ and 
\[
      J_1= (x_1, x_3, x_5),  \; J_2= (x_1, x_4, x_6),   \;  J_3= (x_2, x_3, x_6),   \; J_4= (x_2, x_4, x_5).
\]
It is clear that  $f,g$ are homogeneous elements of degree $3$ and $I$ is a $4$-intersection ideal in the ideals $J_1, \dots , J_4$.

In the applications we need to specialize  the variables $c_i$  to 
elements of $k$.  We now give a precise way to do that. Consider the Zariski open subset
\[
      \mathcal{U}=\{(u_1,\dots, u_6)\in \mathbb{A}^{6}: u_i\neq 0 \, \, \, \text{for all} \, \,  \,1 \leq  i \leq 6 \}. 
\]
We assume   that  $(d_1, \dots , d_6)\in \mathcal{U}$. 
We denote by $\hat{R}=k[x_1,\dots, x_6]$ the polynomial ring in the variables  $x_i$. 
Let
     \[    \hat{\phi}\colon R\rightarrow \hat{R}\]
be the unique $k$-algebra homomorphism such that
\begin{center}
$\hat{\phi}(x_i)= x_i$, \, \,  $\hat{\phi}(c_i)= d_i$
\end{center}
for   all  $1 \leq  i \leq 6$. 
We denote by $\hat{I}$ the ideal of the ring $\hat{R}$ generated by the subset $\hat{\phi}(I)$.

\begin{proposition}  \label{prop!codim2}
The ideals $I$ and $\hat{I}$ are homogeneous codimension $2$ Gorenstein ideals.
\end{proposition}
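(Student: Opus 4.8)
The plan is to treat both ideals by the same two-step scheme used in the preceding proposition. Each of $I$ and $\hat I$ is generated by two homogeneous elements of degree $3$, so Krull's Principal Ideal Theorem (Theorem~\ref{thm!krullprincipal}) gives $\operatorname{codim}\leq 2$ at once; the whole content of the proof is the reverse inequality $\operatorname{codim}\geq 2$. Once that is known, the ideal is generated by exactly $\operatorname{codim}$ many elements, hence is a complete intersection, and Serre's theorem (Theorem~\ref{thm!serbusc}) immediately upgrades this to the Gorenstein property. So I would reduce everything to proving that the two generators form a regular sequence in a polynomial ring (which is a Cohen--Macaulay domain), since for a regular sequence of length $2$ the quotient drops dimension by exactly $2$.

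First I would handle $I\subset R$, where $\dim R=12$. As $R$ is a domain and $f\neq 0$, $f$ is a nonzerodivisor. Next I would check that $f$ is irreducible: writing $f=c_1(x_1x_2)+(c_2x_3x_4+c_3x_5x_6)$ as a polynomial of degree one in $c_1$ whose two coefficients $x_1x_2$ and $c_2x_3x_4+c_3x_5x_6$ are coprime, Gauss's lemma shows $f$ is prime, so $R/(f)$ is a domain. Since $g$ is homogeneous of the same degree $3$ as $f$ but involves the distinct variables $c_4,c_5,c_6$, it is not a scalar multiple of $f$, whence $g\notin(f)$ and $g$ is a nonzerodivisor modulo $f$. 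This makes $f,g$ a regular sequence, so $\operatorname{codim} I=2$, and Serre's theorem gives that $I$ is Gorenstein. (As an alternative, following the previous proposition literally, one could instead pass to the initial ideal $Q$ of $I$ for a suitable lexicographic order, exhibit a monomial ideal $L\subseteq Q$, decompose $V(L)$ into coordinate subspaces and read off $\dim R/L=10$, giving $\operatorname{codim} I=\operatorname{codim} Q\geq\operatorname{codim} L=2$.)

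For $\hat I\subset\hat R$ with $\dim\hat R=6$, set $\hat f=\hat\phi(f)=d_1x_1x_2+d_2x_3x_4+d_3x_5x_6$ and $\hat g=\hat\phi(g)=d_4x_1x_2+d_5x_3x_4+d_6x_5x_6$, and run the same argument. Because every $d_i\neq 0$, the quadratic form $\hat f$ has associated symmetric matrix of full rank $6$ (the three pairs $x_1x_2$, $x_3x_4$, $x_5x_6$ involve disjoint variables), and a quadratic form of rank at least $3$ is irreducible; hence $(\hat f)$ is prime and $\hat R/(\hat f)$ is a domain. It then remains to verify $\hat g\notin(\hat f)$, i.e.\ that $\hat f$ and $\hat g$ are linearly independent quadrics, equivalently that the $2\times 3$ coefficient matrix with rows $(d_1,d_2,d_3)$ and $(d_4,d_5,d_6)$ has rank $2$; given this, $\hat g$ is a nonzerodivisor modulo $\hat f$, so $\operatorname{codim}\hat I=2$ and Serre's theorem finishes.

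The step I expect to be the main obstacle is precisely this last independence check, which is the only place where the hypothesis $(d_1,\dots,d_6)\in\mathcal U$ genuinely enters. Over $R$ the presence of the variables $c_i$ forces $f,g$ to be independent automatically, but after specialization the codimension could a priori drop, and indeed $\hat g$ becomes a scalar multiple of $\hat f$ exactly when the two rows of the coefficient matrix are proportional. I would therefore pin down that the nonvanishing conditions defining $\mathcal U$ (together with the concrete tuples used in the applications) keep that matrix of rank $2$; everything else—the upper bound from Krull and the implication complete intersection $\Rightarrow$ Gorenstein from Serre—is routine.
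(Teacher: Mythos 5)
Your route is genuinely different from the paper's. For the lower bound $\operatorname{codim} I\geq 2$ the paper passes to a lexicographic initial ideal, extracts a monomial subideal $L$ from the initial terms of $f$, $g$ and $-c_4f+c_1g$, and reads off $\dim V(L)=10$ from a decomposition into coordinate subspaces; you instead show directly that $f,g$ is a regular sequence, by proving $f$ irreducible (Gauss's lemma in the variable $c_1$, with coprime coefficients $x_1x_2$ and $c_2x_3x_4+c_3x_5x_6$) and observing that the homogeneous degree-$3$ element $g$ is not a scalar multiple of $f$, hence does not lie in the prime ideal $(f)$. Since $R$ is Cohen--Macaulay this gives $\operatorname{ht}(f,g)=2$, and Krull plus Serre finish exactly as in the paper. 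For $I$ itself your argument is complete and correct, and arguably cleaner: it avoids the initial-ideal computation and the nine-component decomposition entirely. The rank argument showing $\hat f$ irreducible is likewise fine.

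The step you flag as the ``main obstacle'' for $\hat I$ is not a defect of your approach; it exposes a genuine defect of the proposition as stated. Membership in $\mathcal U$ only requires all $d_i\neq 0$ and does not prevent the rows $(d_1,d_2,d_3)$ and $(d_4,d_5,d_6)$ from being proportional: for $d_1=\dots=d_6=1\in\mathcal U$ one has $\hat g=\hat f$, so $\hat I=(\hat f)$ has codimension $1$ and the statement fails. The paper's own proof conceals this at the step $\operatorname{in}_{>}(\tilde r_1)=x_3x_4d_1d_5$: since $\tilde r_1=-d_4\hat f+d_1\hat g=(d_1d_5-d_2d_4)x_3x_4+(d_1d_6-d_3d_4)x_5x_6$, the true leading coefficient is $d_1d_5-d_2d_4$, which can vanish on $\mathcal U$ (and when both minors vanish, $\tilde r_1=0$ and the monomial ideal $Q$ is not contained in the initial ideal of $\hat I$). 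So your reduction to ``the $2\times 3$ coefficient matrix has rank $2$'' is exactly the right condition, but it cannot be deduced from the stated hypotheses; the correct repair is to shrink $\mathcal U$ (for instance, additionally require $d_1d_5-d_2d_4\neq 0$, or take $(d_1,\dots,d_6)$ general as is in fact done in the applications of Section 4), after which your argument closes the proof.
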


\begin{proof}
Since $I$ is generated by two elements, we have,
by Theorem \ref{thm!krullprincipal},  that $\operatorname{codim} I\leq 2$. Now 
we show that $\operatorname{codim} I\geq 2$. We set
\[
        r_1= -c_4f+c_1g, \quad \quad   r_2= g,  \quad \quad  r_3=f . 
\]
 Let $>$ be the lexicographic order on $R$ with $c_1>\dots> c_6> x_1>\dots>x_6$. Consider the ideal 
\[ L =(\operatorname{in}_{>} (r_1), \operatorname{in}_{>} (r_2), \operatorname{in}_{>} (r_3)),\] where
$\operatorname{in}_{>}(r_1)= x_3x_4c_1c_5,\operatorname{in}_{>}(r_2)= x_1x_2c_4$  and  
 $\operatorname{in}_{>}(r_3)= x_1x_2c_1.$
We now prove that $\operatorname{codim} L = 2$. It is enough to show that $\dim \ R/L= 10$. 
Consider the affine variety $X=V(L)\subset \mathbb{A}^{12}$. It holds that 
\[ X= V(c_4,c_1)\cup V(c_5, x_1)\cup V(x_4, x_1)\cup V(x_3, x_1)\cup V(c_1, x_1)\cup V(c_5, x_2)\cup\]\[\cup V(x_4, x_2)\cup V(x_3, x_2)\cup V(c_1, x_2).\]
Using that,
\[\dim \ R/L=\dim \ X\]
the claim is proven. Hence, $\operatorname{codim} I\geq 2$. 

In what follows we show that the ideal $\hat{I}$ is also a codimension $2$ Gorenstein ideal. We set
\[\tilde{r_1}= \hat{\phi}(r_1), \quad \quad \tilde{r_2}= \hat{\phi}(r_2).  \]
 Let $>$ be the lexicographic order on $\hat{R}$ with $x_1>\dots> x_6$. Consider the ideal 
\[ 
          Q=(\operatorname{in}_{>} (\tilde{r_1}),  \operatorname{in}_{>} (\tilde{r_2})),
\] 
where
$\operatorname{in}_{>}(\tilde{r_1})= x_3x_4d_1d_5, \,  \operatorname{in}_{>}(\tilde{r_2})= x_1x_2d_4. $
It is immediate that  $Q=(x_3x_4, x_1x_2)$. It is enough to show that $\dim \ R/Q= 4$. 
Consider the affine variety $Y=V(Q)\subset \mathbb{A}^{6}$. It holds that 
\[
    Y= V(x_2,x_4)\cup V(x_2, x_3)\cup V(x_1, x_3)\cup V(x_1, x_4).
\]
Using that,
\[\dim \ R/Q=\dim \ Y\]
the claim is proven. Hence, $\operatorname{codim} \hat{I}\geq 2$. By Theorem \ref{thm!serbusc}, the ideals $I$ and $\hat{I}$ are Gorenstein. 
\end{proof}

\begin{proposition} \label{thm!assum1}
 
(i) For all $t$ with $1\leq t\leq 4$,  the ideal  $J_t / I$ is a codimension $1$ homogeneous ideal of the quotient ring $R/I$ such that the ring $R/ J_t$ is Gorenstein.

(ii) For all $t, s$ with $1\leq t < s\leq 4$,  it holds that  $\operatorname{codim}_{R/I}(J_t/I+J_s/I)= 3$.

\end{proposition}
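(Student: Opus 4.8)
The plan is to reduce both statements to the computation of Krull dimensions of explicit quotient rings of $R$, using the Third Isomorphism Theorem for rings together with the equality $\dim R/I = 10$. This last equality follows from Proposition~\ref{prop!codim2} and Definition~\ref{def!codim}, since $R$ is a polynomial ring in $12$ variables, hence $\dim R = 12$, and $\operatorname{codim} I = 2$. The homogeneity of $J_t/I$ is immediate throughout, as each $J_t$ is generated by the degree~$1$ homogeneous elements $x_i$, so all ideals in sight are homogeneous with respect to the standard grading.

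For part (i), I would first apply the Third Isomorphism Theorem to obtain the isomorphism $(R/I)/(J_t/I) \cong R/J_t$. Since each $J_t$ is generated by three of the six variables $x_i$, the quotient $R/J_t$ is isomorphic to a standard graded polynomial ring in the remaining nine variables $c_1,\dots,c_6$ together with the three surviving $x_i$, so $\dim R/J_t = 9$. Combining this with $\dim R/I = 10$ yields
\[
\operatorname{codim}_{R/I}(J_t/I) = \dim R/I - \dim R/J_t = 10 - 9 = 1,
\]
which proves that $J_t/I$ is a codimension $1$ homogeneous ideal of $R/I$. The Gorenstein claim is then immediate: a polynomial ring over a field is regular, hence Gorenstein, so $R/J_t$ is Gorenstein.

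For part (ii), the same strategy applies after noting that $J_t/I + J_s/I = (J_t + J_s)/I$, so by the Third Isomorphism Theorem $(R/I)/((J_t+J_s)/I) \cong R/(J_t+J_s)$. The key observation is combinatorial: writing the generating sets of the four ideals as the triples $\{x_1,x_3,x_5\}$, $\{x_1,x_4,x_6\}$, $\{x_2,x_3,x_6\}$, $\{x_2,x_4,x_5\}$, one checks that for every pair $t<s$ the union of the two triples consists of exactly five of the six variables $x_i$, one variable always being omitted. Consequently $R/(J_t+J_s)$ is a polynomial ring in seven variables, namely $c_1,\dots,c_6$ together with the single omitted $x_i$, so $\dim R/(J_t+J_s) = 7$ and
\[
\operatorname{codim}_{R/I}(J_t/I+J_s/I) = 10 - 7 = 3.
\]

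Since every quotient ring occurring in the argument is a polynomial ring, there is no genuine obstacle of a homological nature, and both dimension counts are routine once they are set up via the isomorphism theorem. The only point requiring care is the combinatorial verification in part (ii) that each of the six pairwise unions of index triples omits exactly one variable, which is precisely what guarantees the uniform answer $7$ for all pairs; I would carry out this check explicitly by exhibiting, for each of the six pairs, the variable that is left out.
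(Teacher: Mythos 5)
Your proposal is correct and follows essentially the same route as the paper: both apply the Third Isomorphism Theorem to identify $R/J_t$ and $R/(J_t+J_s)$ with polynomial rings in $9$ and $7$ variables respectively, and combine this with $\dim R/I = 10$ from Proposition~\ref{prop!codim2} to obtain the codimensions $1$ and $3$. The combinatorial check that each pairwise union of the index triples omits exactly one variable $x_i$ is exactly what the paper carries out by listing the six isomorphisms explicitly.
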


\begin{proof}
We first prove $(i)$.
According to the Third Isomorphism Theorem of rings 
\begin{equation} \label{iso!pols11} 
 R/J_1  \cong k[c_1,\dots, c_6, x_2, x_4, x_6], \,  \, \,     R/J_2 \cong    k[c_1,\dots, c_6, x_2, x_3, x_5], 
\end{equation}
\begin{center}
$ R/J_3  \cong k[c_1,\dots, c_6, x_1, x_4, x_5], \,  \, \,     R/J_4 \cong    k[c_1,\dots, c_6, x_1, x_3, x_6].$
\end{center}
So, we conclude that for all t with $1\leq t\leq 4$,

\begin{center}
$ \dim \     R/J_t= 9.$
\end{center}
By Proposition \ref{prop!codim2}, it follows that
\begin{center}
$\dim \  R/I = \dim \  R- \operatorname{codim} \ I=10.$
\end{center}
Hence, using the last two equalities we have that for all t with $1\leq t\leq 4$
\[\operatorname{codim} J_{t}/I=1.\]
Due to the isomorphisms   (\ref{iso!pols11})  for all $t$ with $1\leq t\leq 4$,  the ring  $R/J_t$ is Gorenstein.

 Concerning the Claim $(ii)$, the Third Isomorphism Theorem of rings implies that

\[R/(J_1+J_2)  \cong k[c_1,\dots, c_{6}, x_2], \, \, \, \,  R/(J_1+J_3) \cong  k[c_1,\dots, c_{6}, x_4],  \] 
\[R/(J_1+J_4) \cong  k[c_1,\dots, c_{6}, x_6], \, \, \, \,  R/(J_2+J_3) \cong  k[c_1,\dots, c_{6}, x_5],\]
\[R/(J_2+J_4) \cong  k[c_1,\dots, c_{6}, x_3], \, \, \, \,  R/(J_3+J_4) \cong  k[c_1,\dots, c_{6}, x_1].\]
From the later isomorphisms it holds that for $t, s$ with $1\leq t < s\leq 4$,
\[\dim \  R/(J_t+J_s)= 7.\]

Recall that $\dim \  R/I= 10$. Taking into account the definition of codimension we conclude that for all $t, s$ with $1\leq t < s\leq 4$,
\[\operatorname{codim} \ (J_t/I+J_s/I)= 3.\] 
\end{proof}

For all $t$, with $1\leq t\leq 4$, we denote by $i_t\colon  J_t/I \rightarrow R/I$ the  inclusion map. In what follows, we  define  $  \phi_t\colon  J_t/ I \rightarrow R/I$ for all $t$, with $1\leq t\leq 4$, and  prove that these maps satisfy the assumptions of the  \cite[Theorem~2.3]{NP2}. 

Recall the polynomials $h_1, h_2, h_3$ which were defined in Section~\ref{fund!calc11}. We denote by $\widetilde{h_1},\widetilde{h_2},\widetilde{h_3}$  the polynomials which occur from $h_1, h_2, h_3$ if we substitute
\[a_1=c_1x_2, \,  a_2=c_2x_4, \,  a_3=c_3x_6, \,  b_1=c_4x_2, \,  b_2=c_5x_4, \,  b_3=c_6x_6. \]

\begin{proposition}\label{prop!defphi1}
There exists a unique graded homomorphism of $R/I$-modules \newline $\phi_1\colon J_1/ I \rightarrow R/I$ such that 
\begin{center}
$\phi_1(x_1 + I)= \widetilde{h_1}+ I, \, \, \, \,   \phi_1(x_3 + I)=\widetilde{h_2}+ I, \, \, \, \, \phi_1(x_5 + I)= \widetilde{h_3}+ I  $
\end{center}
\end{proposition}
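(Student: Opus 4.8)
The plan is to reduce the statement to a single finite syzygy check and to carry it out by the determinantal identities governing the model computation of Subsection~\ref{fund!calc11}. Uniqueness and gradedness are formal, so I would dispose of them first. Since $J_1=(x_1,x_3,x_5)$, the residues $x_1+I$, $x_3+I$, $x_5+I$ generate $J_1/I$ over $R/I$, so any $R/I$-homomorphism on $J_1/I$ is determined by its values on them, which gives uniqueness. For gradedness, $x_1,x_3,x_5$ are homogeneous of degree $1$ while each $\widetilde{h_i}$, being a $2\times2$ minor of the substituted matrix, is homogeneous of degree $4$; hence the prescription sends each degree-$1$ generator to an element of degree $4$, and the resulting map, once shown to exist, is automatically homogeneous of degree $3$.

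The content is existence, i.e.\ well-definedness, and for this I would invoke the presentation criterion. Let $F=(R/I)^{3}$ with basis $e_1,e_2,e_3$, let $\pi\colon F\to J_1/I$ be the surjection sending $e_1,e_2,e_3$ to $x_1+I,x_3+I,x_5+I$, and let $\Phi\colon F\to R/I$ send them to $\widetilde{h_1}+I,\widetilde{h_2}+I,\widetilde{h_3}+I$. Then $\phi_1$ exists precisely when $\ker\pi\subseteq\ker\Phi$, i.e.\ when $\Phi$ maps every syzygy of $(x_1,x_3,x_5)$ over $R/I$ into $I$. The crux is thus the description of $\ker\pi$, which I claim is generated by five elements: the three Koszul syzygies $(x_3,-x_1,0)$, $(x_5,0,-x_1)$, $(0,x_5,-x_3)$, and the two ``$I$-syzygies'' $(c_1x_2,c_2x_4,c_3x_6)$ and $(c_4x_2,c_5x_4,c_6x_6)$, which are syzygies because $f=(c_1x_2)x_1+(c_2x_4)x_3+(c_3x_6)x_5$ and $g=(c_4x_2)x_1+(c_5x_4)x_3+(c_6x_6)x_5$ vanish in $R/I$. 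To prove the claim I would lift an arbitrary syzygy to $R$: writing $r_1x_1+r_2x_3+r_3x_5=pf+qg$, substituting these two expressions for $f,g$ and transposing yields a genuine syzygy of $(x_1,x_3,x_5)$ in the polynomial ring $R$; since $x_1,x_3,x_5$ is a regular sequence there, that syzygy is Koszul, and reducing modulo $I$ exhibits the original one as an $R/I$-combination of the five generators.

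It then remains to test $\Phi$ on the five generators, which is the main (if routine) obstacle. For the $I$-syzygies, the tuple $(\widetilde{h_1},-\widetilde{h_2},\widetilde{h_3})$ is, up to sign, the vector of signed $2\times2$ minors of the matrix with rows $(c_1x_2,c_2x_4,c_3x_6)$ and $(c_4x_2,c_5x_4,c_6x_6)$, i.e.\ the cross product of these rows; being orthogonal to each row it makes the relevant combination the expansion of a determinant with a repeated row, hence $0$. For the Koszul syzygies the image is nonzero but lies in $I$: transporting the identity $x_3h_1+x_1h_2=b_3f_1-a_3f_2$ of Subsection~\ref{fund!calc11} through the substitution defining the $\widetilde{h_i}$ gives $x_3\widetilde{h_1}+x_1\widetilde{h_2}=c_6x_6\,f-c_3x_6\,g\in I$, and the remaining two Koszul relations follow from the analogous Cramer-type identities. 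These sign checks are exactly what pin down the middle coordinate, in accordance with the convention $\phi(x_3)=-h_2$ of Subsection~\ref{fund!calc11}. With all five images in $I$ we have $\ker\pi\subseteq\ker\Phi$, so $\Phi$ factors through $\pi$ to give $\phi_1$.

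Finally, I would observe that this whole argument is the image, under the substitution homomorphism $\psi\colon k[a_i,b_i,x_1,x_3,x_5]\to R$ defined by $a_i\mapsto c_ix_{2i}$, $b_i\mapsto c_{i+3}x_{2i}$ and $x_j\mapsto x_j$, of the computation behind \cite[Theorem~4.3]{P2}: since $\psi(f_1)=f$, $\psi(f_2)=g$ and $\psi(h_i)=\widetilde{h_i}$, each certifying identity in the source maps to one in $R$, so existence of $\phi_1$ may alternatively be deduced by transporting $\phi$ along $\psi$. The one step that genuinely needs care is the generation of the syzygy module by the five explicit relations; everything after that is determinant bookkeeping.
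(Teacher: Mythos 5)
Your overall strategy is sound and genuinely different from the paper's: the paper disposes of this proposition with a one-line citation of \cite[Theorem~4.3]{NP2} (in effect, transporting the model computation of Subsection~\ref{fund!calc11} along the substitution $a_i\mapsto c_ix_{2i}$, $b_i\mapsto c_{i+3}x_{2i}$), whereas you give a self-contained proof by writing down a presentation of $J_1/I$. Your description of the syzygy module of $(x_1+I,x_3+I,x_5+I)$ by the three Koszul relations together with the two relations coming from $f$ and $g$ is correct --- lifting a syzygy to $R$, subtracting the visible part of $pf+qg$, and using that $x_1,x_3,x_5$ is a regular sequence in the polynomial ring is exactly the right argument, and it is the step carrying the real content. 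Uniqueness and gradedness are handled correctly.

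There is, however, a sign inconsistency that you notice in passing but never resolve, and as written it breaks the verification step. You define $\Phi(e_2)=\widetilde{h_2}+I$, matching the literal statement of the proposition, but every identity you then check is an identity for the tuple $(\widetilde{h_1},-\widetilde{h_2},\widetilde{h_3})$: the cross-product/orthogonality argument annihilates $c_1x_2\widetilde{h_1}-c_2x_4\widetilde{h_2}+c_3x_6\widetilde{h_3}$, not $\Phi\bigl((c_1x_2,c_2x_4,c_3x_6)\bigr)=c_1x_2\widetilde{h_1}+c_2x_4\widetilde{h_2}+c_3x_6\widetilde{h_3}$, and the transported identity $x_3\widetilde{h_1}+x_1\widetilde{h_2}=c_6x_6f-c_3x_6g$ computes $\Phi\bigl((x_3,-x_1,0)\bigr)$ only if $\Phi(e_2)=-\widetilde{h_2}+I$. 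With the plus sign the images genuinely fail to lie in $I$: for instance $c_1x_2\widetilde{h_1}+c_2x_4\widetilde{h_2}+c_3x_6\widetilde{h_3}=2c_2(c_1c_6-c_3c_4)x_2x_4x_6$, which survives the specialization $x_1=x_3=x_5=0$ that kills $I$, so the unsigned map does not exist. The repair is to carry the sign convention of Subsection~\ref{fund!calc11}, where $\phi(x_3+I)=-h_2+I$, into the definition of $\Phi$ and prove the statement with $\phi_1(x_3+I)=-\widetilde{h_2}+I$; all of your verifications then go through verbatim. (This also exposes what appears to be a dropped minus sign in the proposition as printed, which should be reconciled with Subsection~\ref{fund!calc11} and with \cite[Theorem~4.3]{P2} before the proof can be considered complete.)
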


\begin{proof}
It follows from \cite[Theorem~4.3]{NP2}.  
\end{proof}

For the definition of $\phi_2$ we replace $x_3$ by $x_4$ and $x_5$ by $x_6$. In this case,  $\widetilde{h_1},\widetilde{h_2},\widetilde{h_3}$ are the polynomials which occur from $h_1, h_2, h_3$ if we substitute
\[a_1=c_1x_2, \, a_2=c_2x_3, \, a_3=c_3x_5, \, b_1=c_4x_2, \, b_2=c_5x_3, \, b_3=c_6x_5. \] 
For the definitions of $\phi_3$ and  $\phi_4$ we work similarly. For all t, with $1\leq t\leq 4$, the degree of $\phi_t$ is equal to $3$.
By the discussion after \cite[Proposition~ 2.1]{NP2}  the new unprojection variable has degree equal to the degree of the corresponding $\phi_t$.

\begin{proposition}\label{prop!hom1}
For all $t$, with $1\leq t\leq 4$, the $R/I$-module $ \operatorname{Hom}_{R/I}(J_t/I,R/I)$ is generated by the two elements $i_t$ and $\phi_t$.
\end{proposition}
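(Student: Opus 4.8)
The plan is to observe that, for each $t$, the pair $J_t/I \subset R/I$ is an instance of the model configuration of Subsection~\ref{fund!calc11}, and then to transport the generation statement proved there.

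I would first record that the ambient hypotheses for a Kustin--Miller generator are in force: by Proposition~\ref{prop!codim2} the ideal $I$ is Gorenstein, so $R/I$ is a graded Gorenstein ring, and by Proposition~\ref{thm!assum1}(i) the ideal $J_t/I$ is a homogeneous codimension~$1$ ideal of $R/I$ whose quotient $(R/I)/(J_t/I) \cong R/J_t$ is Gorenstein. Consequently we are exactly in the Kustin--Miller situation, so $\operatorname{Hom}_{R/I}(J_t/I, R/I)$ is a graded module generated by the degree-zero inclusion $i_t$ together with one further homomorphism (cf.\ \cite[Lemma~1.1]{PR}). The whole content of the proposition is that this extra generator can be taken to be the explicit map $\phi_t$ fixed in Proposition~\ref{prop!defphi1} and its analogues.

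The key step is the identification with Subsection~\ref{fund!calc11}. For $t = 1$ I would write the two generators of $I$ as combinations of the generators $x_1, x_3, x_5$ of $J_1$,
\[
f = (c_1 x_2)\,x_1 + (c_2 x_4)\,x_3 + (c_3 x_6)\,x_5, \qquad g = (c_4 x_2)\,x_1 + (c_5 x_4)\,x_3 + (c_6 x_6)\,x_5,
\]
so that, under the substitution $a_1 = c_1 x_2,\ a_2 = c_2 x_4,\ a_3 = c_3 x_6,\ b_1 = c_4 x_2,\ b_2 = c_5 x_4,\ b_3 = c_6 x_6$, the ideals $I$ and $J_1$ become precisely the ideals $(f_1, f_2)$ and $(x_1, x_3, x_5)$ of Subsection~\ref{fund!calc11}, the coefficient matrix $A$ specialises to $\left(\begin{smallmatrix} c_1 x_2 & c_2 x_4 & c_3 x_6 \\ c_4 x_2 & c_5 x_4 & c_6 x_6 \end{smallmatrix}\right)$, and its maximal minors are exactly $\widetilde{h_1}, \widetilde{h_2}, \widetilde{h_3}$. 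Since $\phi_1$ sends the three generators of $J_1/I$ to these minors with the cofactor signs fixed in Proposition~\ref{prop!defphi1}, it coincides with the map denoted $\phi$ in Subsection~\ref{fund!calc11}. Invoking \cite[Theorem~4.3]{NP2} --- the form of \cite[Theorem~4.3]{P2} valid when the matrix entries $a_i, b_i$ are arbitrary elements of the ring rather than independent indeterminates --- then gives that $\operatorname{Hom}_{R/I}(J_1/I, R/I)$ is generated by $i_1$ and $\phi_1$. The cases $t = 2, 3, 4$ follow by the same argument after the relabelling of the $x_i$ used to define $\phi_t$: in each case $f$ and $g$ are again $R$-linear combinations of the three generators of $J_t$ (for $J_2 = (x_1, x_4, x_6)$, for example, $f = (c_1 x_2) x_1 + (c_2 x_3) x_4 + (c_3 x_5) x_6$ and $g = (c_4 x_2) x_1 + (c_5 x_3) x_4 + (c_6 x_5) x_6$), and reading off the corresponding $2 \times 3$ matrix and its minors identifies the data with Subsection~\ref{fund!calc11}.

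The step I expect to be the main obstacle is justifying that the generation statement of Subsection~\ref{fund!calc11} is preserved under the specialisation $a_i \mapsto c_i x_{2k}$, $b_i \mapsto c_i x_{2k}$: in the model the entries are independent variables, whereas here they are the non-generic products $c_i x_j$, which create extra coincidences among the entries and could in principle enlarge $\operatorname{Hom}$. This is precisely why the operative reference is the general form \cite[Theorem~4.3]{NP2}, whose hypotheses are exactly the codimension facts already established --- $\operatorname{codim} I = 2$, $\operatorname{codim} J_t = 3$, and $I \subset J_t$ --- in Propositions~\ref{prop!codim2} and~\ref{thm!assum1}. Once these are in place the theorem applies directly, and the well-definedness of each $\phi_t$, settled in Proposition~\ref{prop!defphi1} and its analogues, completes the argument with no further computation.
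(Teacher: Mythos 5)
Your proposal is correct and takes essentially the same route as the paper: the paper's entire proof is the one-line citation of \cite[Theorem~4.3]{P2}, and your identification of each pair $I\subset J_t$ with the model configuration of Subsection~\ref{fund!calc11} (writing $f,g$ as $R$-linear combinations of the three generators of $J_t$ so that the coefficient matrix specialises $A$ and its signed minors become $\widetilde{h_1},\widetilde{h_2},\widetilde{h_3}$) is exactly the intended way of applying that theorem, whose hypotheses are the codimension facts of Propositions~\ref{prop!codim2} and~\ref{thm!assum1}. The only discrepancy is cosmetic: for the generation statement the paper cites \cite[Theorem~4.3]{P2} and reserves \cite[Theorem~4.3]{NP2} for the well-definedness of $\phi_t$ in Proposition~\ref{prop!defphi1}, whereas you put the main weight on the latter; since you invoke both, nothing is lost.
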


\begin{proof}
It follows from \cite[Theorem~4.3]{P2}.  
\end{proof}

For all $t, s$, with $1\leq t,s\leq 4$ and $t\neq s$, we define $r_{ts}=0$.

\begin{proposition} \label{prop!existrst1}
For all $t, s$, with $1\leq t,s\leq 4$ and $t\neq s$, it holds that
\[
 \phi_t(J_t/I) \subset  J_s/I.\]

\end{proposition}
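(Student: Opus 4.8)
\section*{Proof proposal}

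The plan is to reduce the containment to a statement about monomials and then to a small combinatorial count. First I would observe that, since each $\phi_t$ is a homomorphism of $R/I$-modules and $J_t/I$ is generated over $R/I$ by the residues of the three variables generating $J_t$, the image $\phi_t(J_t/I)$ is precisely the ideal of $R/I$ generated by the three residues $\widetilde{h_1}+I,\widetilde{h_2}+I,\widetilde{h_3}+I$ attached to the index $t$ (as in Proposition~\ref{prop!defphi1} and the substitutions following it). Because $J_s/I$ is an ideal of $R/I$, it therefore suffices to prove that each generator $\widetilde{h_i}$ lies in the ideal $J_s$ of $R$, for every $s\neq t$; passing to residues then yields $\phi_t(J_t/I)\subset J_s/I$.

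Next I would make the monomial shape of the $\widetilde{h_i}$ explicit. Writing $C_t$ for the set of the three variables among $x_1,\dots,x_6$ that do \emph{not} occur among the generators of $J_t$ (so $C_1=\{x_2,x_4,x_6\}$, $C_2=\{x_2,x_3,x_5\}$, $C_3=\{x_1,x_4,x_5\}$, $C_4=\{x_1,x_3,x_6\}$), the substitution defining $\phi_t$ sends the two entries of each column of $A$ to scalar (in the $c_i$) multiples of one element of $C_t$. Since $h_i$ is the $2\times 2$ minor of $A$ obtained by deleting the $i$-th column, each $\widetilde{h_i}$ is then a scalar multiple of the product of the two elements of $C_t$ indexing the surviving columns. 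For $t=1$ this reads $\widetilde{h_1}=(c_2c_6-c_3c_5)x_4x_6$, $\widetilde{h_2}=(c_1c_6-c_3c_4)x_2x_6$ and $\widetilde{h_3}=(c_1c_5-c_2c_4)x_2x_4$, and the identical computation for the substitutions attached to $t=2,3,4$ produces, in each case, the three pairwise products of the elements of $C_t$.

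Finally I would invoke the combinatorial core. Each $J_s$ is a monomial prime generated by three variables, so a product $x_ax_b$ lies in $J_s$ exactly when $x_a$ or $x_b$ is one of those generators, that is, when $x_a\notin C_s$ or $x_b\notin C_s$. Hence every pairwise product from $C_t$ lies in $J_s$ as soon as $C_t$ contains at most one element of $C_s$, i.e.\ as soon as $|C_t\cap C_s|\le 1$. A direct inspection of the six unordered pairs shows that $C_t\cap C_s$ is a single variable for every $t\neq s$; this is dual to the fact, already recorded in Proposition~\ref{thm!assum1}(ii), that each pair $J_t,J_s$ shares exactly one generating variable. Consequently every $\widetilde{h_i}$ lies in $J_s$, which finishes the argument. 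I do not expect a genuine obstacle: the only points needing care are verifying that the substitution really collapses each $\widetilde{h_i}$ to a single monomial in the complementary variables for all four indices, and arranging the complement intersections so that the twelve ordered pairs $(t,s)$ are dispatched uniformly rather than one at a time.
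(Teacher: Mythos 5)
Your proposal is correct and is exactly the ``direct computation using the definition of the maps $\phi_t$'' that the paper's one-line proof alludes to: each $\widetilde{h_i}$ attached to $t$ is a scalar multiple of a product of two distinct variables from the complement $C_t$, and since $|C_t\cap C_s|=1$ for $t\neq s$, such a product always contains a generator of $J_s$. The only difference is that you spell out and organize the verification (via the complement sets and the duality with Proposition~\ref{thm!assum1}(ii)) where the paper omits it.
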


\begin{proof}
It is a direct computation using the definition of the maps $\phi_t$. 
\end{proof}

\begin{proposition}
For all $t, s$, with $1\leq t,s\leq 4$ and $t\neq s$, there exists a homogeneous element $A_{st}$ such that
\[\phi_s (\phi_t (p)) = A_{st}p \]
for all  $p\in J_t/I$.

\end{proposition}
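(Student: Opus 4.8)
The plan is to turn the statement into an explicit computation on the generators of $J_t$, using the fact (Proposition \ref{prop!existrst1}) that $\phi_t(J_t/I)\subset J_s/I$, which is exactly what makes $\phi_s\circ\phi_t\colon J_t/I\to R/I$ meaningful in the first place. Since $\phi_s\circ\phi_t$ is a homomorphism of $R/I$-modules, Proposition \ref{prop!hom1} lets me write it as $A\,i_t+B\,\phi_t$ for homogeneous $A,B\in R/I$, and the content of the statement is precisely that one may take $B=0$, with $A=A_{st}$. Rather than argue the vanishing of $B$ abstractly, I would exhibit $A_{st}$ directly and verify $\phi_s(\phi_t(e))=A_{st}\,e$ on each of the three generators $e$ of $J_t$; this simultaneously produces $A_{st}$ and forces the $\phi_t$-component to disappear.

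First I would record the maps $\phi_t$ in closed form. Writing $C=\left(\begin{smallmatrix} c_1 & c_2 & c_3\\ c_4 & c_5 & c_6\end{smallmatrix}\right)$ and letting $m_1,m_2,m_3$ be the $2\times 2$ minors of $C$ obtained by deleting columns $1$, $2$, $3$ respectively, each $\widetilde{h_i}$ is $m_i$ times a quadratic monomial in the $x_j$. For instance $\phi_1$ sends $x_1,x_3,x_5$ to $m_1x_4x_6$, $-m_2x_2x_6$, $m_3x_2x_4$ (the middle generator carrying a minus sign, exactly as in the map of Subsection \ref{fund!calc11}), and the other three maps are obtained by the substitutions described before the Proposition.

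Next I would extract the only reductions needed in $R/I$. Forming the combinations $c_6f-c_3g$, $c_5f-c_2g$ and $c_4f-c_1g$ and using $f\equiv g\equiv 0$ gives the three congruences $m_2x_1x_2\equiv -m_1x_3x_4$, $m_3x_1x_2\equiv m_1x_5x_6$ and $m_3x_3x_4\equiv -m_2x_5x_6$ in $R/I$. These are the whole of the algebra: when $\phi_s$ is applied to a value $\phi_t(e)$, which by Proposition \ref{prop!existrst1} is a minor times a quadratic monomial lying in $J_s/I$, $R/I$-linearity turns it into a minor times a cubic monomial, and exactly one of the three congruences rewrites this as $A_{st}$ times the original generator $e$. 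The resulting multiplier has the shape $A_{st}=\pm\, m_a m_b\, x_\ell^{\,2}$, where $x_\ell$ is the unique $x$-variable with $R/(J_t+J_s)\cong k[c_1,\dots,c_6,x_\ell]$ from Proposition \ref{thm!assum1}(ii), and $m_a,m_b$ are two of the three minors; in particular one finds $A_{st}=A_{ts}$.

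The verification must be run over the $12$ ordered pairs $(t,s)$, but the symmetry among $J_1,\dots,J_4$ collapses this to very few genuinely different cases. Homogeneity of $A_{st}$ is automatic: each $\phi_t$ has degree $3$, so $\phi_s\circ\phi_t$ has degree $6$ and $A_{st}$ is homogeneous of degree $6$. The one real obstacle is bookkeeping, namely checking that a single element $A_{st}$ works simultaneously for all three generators of $J_t$; this is exactly where the sign on the middle value of each $\phi_t$ and the signs in the three congruences must be tracked precisely, since any sign slip would make the three generator-computations return incompatible multipliers and the argument would break.
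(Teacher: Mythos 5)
Your proposal is correct, but it proves the statement by a genuinely different route than the paper does. The paper's proof is a one-line appeal to the general machinery of parallel Kustin--Miller unprojection (\cite[Proposition~2.1]{NP2}): once one knows that $\operatorname{Hom}_{R/I}(J_t/I,R/I)$ is generated by $i_t,\phi_t$ and that $\phi_t(J_t/I)\subset J_s/I$ (the case $r_{ts}=0$), the existence of $A_{st}$ is automatic, and the explicit formulas are only reported afterwards as Macaulay2 output. You instead exhibit $A_{st}$ by direct computation: closed forms for the $\phi_t$ in terms of the $2\times 2$ minors $m_1,m_2,m_3$ of $C$, the three congruences obtained from $c_6f-c_3g$, $c_5f-c_2g$, $c_4f-c_1g$, and evaluation on the three generators of $J_t$, with $R/I$-linearity of both $\phi_s\circ\phi_t$ and multiplication by $A_{st}$ reducing the claim to the generators. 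I checked one representative case: $\phi_1(\phi_2(x_1))=\phi_1(m_1x_3x_5)=-m_1m_2x_2x_5x_6\equiv -m_2m_3x_2^2\,x_1$, and likewise on $x_4,x_6$, recovering exactly the paper's $A_{12}=x_2^2(c_3c_4-c_1c_6)(-c_2c_4+c_1c_5)$; so your scheme, including the predicted shape $\pm m_am_b x_\ell^2$ and the symmetry $A_{st}=A_{ts}$, is sound. Your approach costs some bookkeeping over the twelve ordered pairs (and your phrase \emph{exactly one of the three congruences} is a slight overstatement --- for some generators no rewriting is needed at all) but it buys a self-contained proof together with the explicit multipliers, which the paper obtains only by computer. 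One further point in your favour: the minus sign you insist on for the middle generator of each $\phi_t$ is forced by well-definedness modulo $I$ (via the Laplace expansion identity $a_1h_1-a_2h_2+a_3h_3=0$), whereas the paper's own statement of Proposition~\ref{prop!defphi1} omits it.
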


\begin{proof}
It follows from \cite[Proposition~2.1]{NP2}.  
\end{proof}

\begin{remark}
We note that the elements $A_{st}$ are polynomial expressions in the variables $c_i$ and $x_j$. We computed 
them using the computer algebra program Macaulay2~\cite{GS}. We now write down $A_{12}$:
\[A_{12}=(x_2^2)(c_3c_4-c_1c_6)(-c_2c_4+c_1c_5).\]
Applying symmetry, one can get formulas for all $A_{st}$.
\end{remark}

Following \cite[Section~2]{NP2}, we write down explicitly the final ring as a quotient of a polynomial ring by a codimension $6$ ideal.

\begin{definition} \label{def!mrin1}
Let  $T_1, T_2, T_3, T_4$ be four new variables of degree $3$. We define as  \textit{$I_{un}$} the  ideal 
\[  (I)+  (T_1x_1 \text{-} \phi_1(x_1),\,  T_1x_3 \text{-} \phi_1(x_3), \, T_1x_5 \text{-} \phi_1(x_5), \, T_2x_1 \text{-} \phi_2(x_1), \,  T_2x_4 \text{-} \phi_2(x_4), \, T_2x_6 \text{-} \phi_2(x_6),\] \[ T_3x_2 \text{-} \phi_3(x_2), \, T_3x_3 \text{-} \phi_3(x_3), \, T_3x_6 \text{-} \phi_3(x_6), \, T_4x_2 \text{-} \phi_4(x_2), T_4x_4 \text{-} \phi_4(x_4), \, T_4x_5 \text{-} \phi_4(x_5),\]\[T_2T_1 \text{-} A_{21}, \,  T_3T_1 \text{-} A_{31}, \, T_4T_1 \text{-} A_{41}, \, T_3T_2 \text{-} A_{32}, \,  T_4T_2 \text{-} A_{42}, \, T_4T_3 \text{-} A_{43} )\] 
of the polynomial ring  $R[T_1,T_2,T_3,T_4]$.
We set  $ \textit{$R_{un}$}= R[T_1,T_2,T_3,T_4]/ I_{un}$.
\end{definition}

\begin{remark}
The reason we put, for all $1 \leq i \leq 4$, $\operatorname{deg} T_i = 3$ is that each homomorphism $\phi_i$ 
is graded of degree $3$.  We also note that
according to \cite[Proposition~2.1]{NP2} the degree of each $A_{st}$ is equal to 6.
\end{remark}

\begin{theorem}\label{main!thm1}
The  ring  $R_{un}$ is Gorenstein.
\end{theorem}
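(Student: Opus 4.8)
The plan is to apply the main theorem on parallel Kustin--Miller unprojection, namely \cite[Theorem~2.3]{NP2}, to the data assembled throughout this section. Every hypothesis needed for that theorem has already been verified one by one in the preceding propositions, so the proof reduces to checking that the present situation fits exactly into the framework of \cite{NP2} and then invoking their result. First I would record that the base ring of the construction, $R/I$, is Gorenstein: this is the content of Proposition~\ref{prop!codim2}, which shows that $I$ is a codimension~$2$ Gorenstein ideal of $R$. Thus $R/I$ is a Gorenstein ring of codimension~$2$ in $R$, and it plays the role of the ambient Gorenstein ring in the parallel unprojection setup.

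Next I would go through the list of structural hypotheses of \cite[Theorem~2.3]{NP2} and match each to a proposition already proved. The four ideals $J_t/I$ are codimension~$1$ ideals of $R/I$ with $R/J_t$ Gorenstein by Proposition~\ref{thm!assum1}(i), and the pairwise intersection condition $\operatorname{codim}_{R/I}(J_t/I + J_s/I) = 3$ holds by Proposition~\ref{thm!assum1}(ii). The homomorphisms $\phi_t$ exist and are graded of degree $3$ (Proposition~\ref{prop!defphi1} together with the analogous statements for $t = 2,3,4$), and for each $t$ the module $\operatorname{Hom}_{R/I}(J_t/I, R/I)$ is generated by the inclusion $i_t$ and $\phi_t$ (Proposition~\ref{prop!hom1}). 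The compatibility conditions between distinct divisors are precisely the containments $\phi_t(J_t/I) \subset J_s/I$ for $t \neq s$ (Proposition~\ref{prop!existrst1}), the choice $r_{ts} = 0$, and the existence of homogeneous elements $A_{st}$ of degree $6$ with $\phi_s(\phi_t(p)) = A_{st}\,p$ for all $p \in J_t/I$. With these in hand, the ideal $I_{un}$ of Definition~\ref{def!mrin1} is exactly the unprojection ideal produced by the parallel construction of \cite{NP2}: the twelve relations $T_t x_j - \phi_t(x_j)$ encode the unprojection equations $T_t r = \phi_t(r)$ as $r$ ranges over the generators of each $J_t/I$, while the six relations $T_s T_t - A_{st}$ encode the quadratic relations among the new variables dictated by $\phi_s(\phi_t(p)) = A_{st}\,p$.

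The point I expect to require the most care — and where the real content lies — is confirming that the combinatorial compatibility demanded by \cite[Theorem~2.3]{NP2} is genuinely met, so that the parallel unprojection is well defined and introduces no relations beyond those listed in $I_{un}$. Concretely, I would check that the data $(r_{ts}, A_{st})$ is consistent: that the vanishing $r_{ts} = 0$ is compatible with the containments $\phi_t(J_t/I) \subset J_s/I$, and that the symmetry and cocycle-type relations among the $A_{st}$ required by \cite{NP2} hold for the elements we computed. Since the maps $\phi_t$ were constructed so as to satisfy these assumptions, this verification is the precise step that licenses the application of the theorem.

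Once this is confirmed, \cite[Theorem~2.3]{NP2} yields directly that $R_{un} = R[T_1,T_2,T_3,T_4]/I_{un}$ is Gorenstein. As a final consistency check I would note the codimension bookkeeping: starting from the codimension~$2$ Gorenstein ring $R/I$ and performing four parallel unprojections, each raising the codimension by one, produces a Gorenstein ring of codimension $2 + 4 = 6$, in agreement with the statement of the theorem and with the count implicit in Definition~\ref{def!mrin1}.
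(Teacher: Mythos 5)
Your proposal is correct and follows exactly the paper's route: the paper's proof likewise consists of verifying that Propositions~\ref{thm!assum1}, \ref{prop!defphi1} and \ref{prop!existrst1} supply the hypotheses of \cite[Theorem~2.3]{NP2} and then invoking that theorem. Your version is merely a more detailed bookkeeping of which proposition discharges which hypothesis, so there is nothing substantive to add or correct.
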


\begin{proof}
 By Propositions  ~\ref{thm!assum1}, \ref{prop!defphi1} and  \ref{prop!existrst1},  the assumptions of  
\cite[Theorem~2.3]{NP2} are satisfied. Hence, the ring  $R_{un}$  is Gorenstein. 
\end{proof}

\begin{proposition}
The homogeneous ideal  $I_{un}$ is a codimension $6$ ideal with a minimal generating set of $20$ elements.
\end{proposition}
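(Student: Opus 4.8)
The plan is to prove the two assertions separately: first that $\operatorname{codim} I_{un} = 6$, and then that the twenty generators displayed in Definition~\ref{def!mrin1} form a minimal generating set. For the codimension I would exploit that $R_{un}$ is obtained from $R/I$ by parallel Kustin--Miller unprojection adjoining the four variables $T_1,\dots,T_4$. Each Kustin--Miller step adjoins one variable while preserving Krull dimension, so $\dim R_{un} = \dim R/I = 10$, the latter being established in Proposition~\ref{thm!assum1}. Since $R[T_1,\dots,T_4]$ is a polynomial ring in $16$ variables, this gives $\operatorname{codim} I_{un} = 16 - 10 = 6$; equivalently, the codimension is the initial value $\operatorname{codim}_R I = 2$ augmented by the number $4$ of unprojection variables. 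The Gorenstein conclusion of Theorem~\ref{main!thm1} ensures $R_{un}$ is Cohen--Macaulay and equidimensional, so this codimension is unambiguous.

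For minimality I would work in the standard graded ring $P = R[T_1,\dots,T_4]$, with $\deg c_i = \deg x_i = 1$ and $\deg T_t = 3$, and irrelevant maximal ideal $\mathfrak{m}$. By graded Nakayama the minimal number of generators of $I_{un}$ equals $\dim_k(I_{un}/\mathfrak{m}I_{un})$, which I would compute degree by degree. Because the given set already generates $I_{un}$, it suffices to show that the images of these generators in $I_{un}/\mathfrak{m}I_{un}$ are $k$-linearly independent; as they occur only in degrees $3$, $4$ and $6$ (the two elements $f,g$; the twelve relations $T_tx_j-\phi_t(x_j)$; and the six relations $T_sT_t-A_{st}$), I would handle these three degrees in turn, so that the total count $2+12+6$ appears.

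The key device is the \emph{$T$-degree} of a monomial, namely its total degree in $T_1,\dots,T_4$. In degree $3$ one has $(\mathfrak{m}I_{un})_3 = 0$, so $f,g$ are automatically $k$-independent minimal generators. In degree $4$ every element of $(\mathfrak{m}I_{un})_4$ lies in $\mathfrak{m}_1\cdot\{f,g\}\subset k[c_i,x_i]$ and hence has $T$-degree $0$, whereas each $T_tx_j-\phi_t(x_j)$ contributes the $T$-degree $1$ monomial $T_tx_j$; since the twelve monomials $T_tx_j$ are pairwise distinct, these relations are independent modulo $(\mathfrak{m}I_{un})_4$. In degree $6$, analysing the sources $\mathfrak{m}_e\cdot(I_{un})_{6-e}$ for $e\geq 1$ shows that every element of $(\mathfrak{m}I_{un})_6$ has $T$-degree at most $1$, while each $T_sT_t-A_{st}$ contributes the $T$-degree $2$ monomial $T_sT_t$ (the $A_{st}$ involving no $T$ variables); distinctness of the six monomials $T_sT_t$ then forces independence. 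One must also record that $(I_{un}/\mathfrak{m}I_{un})_5 = 0$, as $I_{un}$ has no generator in degree $5$.

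The step I expect to be the main obstacle is the degree-$6$ bookkeeping: one has to check that none of $\mathfrak{m}_1\cdot(I_{un})_5$, $\mathfrak{m}_2\cdot(I_{un})_4$ or $\mathfrak{m}_3\cdot(I_{un})_3$ can produce a genuine $T_sT_t$ term. This rests on two observations, that $f,g$ and the $A_{st}$ contain no $T$ variables, and that the degree-$1$ and degree-$2$ graded pieces of $\mathfrak{m}$ contain no $T$ (since $\deg T_t = 3$), so that multiplication by $\mathfrak{m}$ cannot raise the $T$-degree past the bound claimed in each degree. Granting these, $\dim_k(I_{un}/\mathfrak{m}I_{un}) = 2+12+6 = 20$, and combined with the fact that the displayed twenty elements generate $I_{un}$ this shows they are a minimal generating set.
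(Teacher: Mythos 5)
Your argument is correct, and the codimension half coincides with the paper's (the paper simply invokes that each Kustin--Miller step raises the codimension by one, starting from $\operatorname{codim} I=2$; your dimension-count through $\dim R_{un}=\dim R/I=10$ is the same fact, though the reference for $\dim R/I=10$ is Proposition~\ref{prop!codim2} rather than Proposition~\ref{thm!assum1}). The minimality half, however, takes a genuinely different route. The paper specializes $c_1=c_3=c_5=c_6=0$, $c_2=c_4=1$, observes that the resulting ideal $\widetilde{I_{un}}$ has $16$ monomial and $4$ binomial generators whose minimality is transparent, and uses that the minimal number of generators can only drop under such a specialization to get the lower bound $20$. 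You instead compute $\dim_k(I_{un}/\mathfrak{m}I_{un})$ directly by graded Nakayama, degree by degree, with the $T$-degree as the organizing device: since $\deg T_t=3$ and the elements $f$, $g$, $\phi_t(x_j)$, $A_{st}$ are all $T$-free, multiplication by $\mathfrak{m}$ cannot produce the monomials $T_tx_j$ in degree $4$ or $T_sT_t$ in degree $6$, and pairwise distinctness of these monomials forces independence, giving $2+12+6=20$. The paper's specialization buys a reduction to an essentially combinatorial ideal, at the cost of leaving the semicontinuity step and the minimality of $\widetilde{I_{un}}$ as assertions; your filtration argument is self-contained, avoids any choice of special fibre, and isolates exactly which structural features of the generators (namely, that the non-$T$ parts of the relations are $T$-free and that $\deg T_t$ exceeds $2$) make the count work. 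Both arguments rest on Definition~\ref{def!mrin1} supplying the upper bound of $20$ generators.
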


\begin{proof}
According to the grading of the variables and the discussion before Proposition~\ref{prop!hom1} it is not difficult to see that $I_{un}$ is a 
homogeneous ideal. Recall that in Kustin-Miller unprojection the codimension  increases by $1$. 
Hence, the homogeneous ideal $I_{un}$, as a result of a series of four unprojections of Kustin-Miller type starting by the
codimension $2$ ideal $I$, is a codimension $6$ ideal. In order to prove that $I_{un}$ is minimally
generated by $20$ elements we use the idea of specialization. More precisely we set
\begin{center}
$c_1=c_3=c_{5}=c_{6}=0$
\end{center}
and
\begin{center}
$c_2=c_4=1$
\end{center}
in the ideal  $I_{un}$. We call  $\widetilde{I_{un}}$  the ideal which occurs after these substitutions.
The ideal  $\widetilde{I_{un}}$  is a homogeneous 
ideal with $16$ monomials and $4$ binomials as generators. It is not 
difficult to see that $\widetilde{I_{un}}$ is minimally generated by these 
elements. Hence, we conclude that  $I_{un}$ is generated by at least $20$ elements. By 
Definition~\ref{def!mrin1}, $I_{un}$ is generated by $20$ homogeneous elements. The result follows. 
\end{proof}

\section{Applications}\label{Sec!app1}
In this section we prove,  using  Theorem \ref{main!thm1},  the existence of $3$ 
families of Fano $3$-~folds of codimension $6$ in weighted projective space. For some basic definitions and facts related to singularities and Fano $3$-folds which appear through this section we refer to  \cite[ Section~3]{PV}.
We note that in what follows we make essential use of the computer algebra systems Macaulay2 \cite{GS}  and Singular \cite{GPS01}.

 The first construction is summarised in the following theorem. It corresponds to the
entry $29376$ of Graded Ring Database~\cite{ABR,BR,BS1,BS2,BS3}. More details for the construction are given in Subsection~\ref{constr!29376}.

\begin{theorem}\label{constr!29376}
There exists a family of quasismooth, projectively normal and projectively 
Gorenstein Fano $3$-folds $X\subset \mathbb{P}(1^8, 2,3)$, nonsingular away from 
one quotient singularity $\frac{1}{3}(1,1,2)$, with Hilbert series

\begin{center}
$P_{X}(t)=\frac{1-6t^2+15t^4-20t^6+15t^{8}-6t^{10}+t^{12}}{(1-t)^8(1-t^2)(1-t^3)}.$
\end{center}
\end{theorem}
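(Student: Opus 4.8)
The plan is to realize $X$ as a well-chosen $4$-dimensional quotient of the codimension $6$ Gorenstein ring $R_{un}$ constructed in Theorem~\ref{main!thm1}, so that Gorensteinness of the ambient ring is inherited essentially for free, and then to verify the geometric properties (quasismoothness, the Hilbert series, and the singularity analysis) by a combination of weight assignments and computer algebra. First I would assign weights to the variables $c_i,x_i$ and to the four unprojection variables $T_1,\dots,T_4$ so that the embedding lands in $\mathbb{P}(1^8,2,3)$; since the $\phi_t$ are graded of degree $3$ and each $T_i$ carries the degree of the corresponding $\phi_t$, the weights are constrained and must be chosen compatibly with the quasihomogeneity of $I_{un}$. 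After specializing the $c_i$ to sufficiently general elements of $k$ (using the open set $\mathcal U$ to keep the codimension $2$ hypothesis of Proposition~\ref{prop!codim2} intact) and imposing the appropriate linear relations among the remaining variables to cut the Krull dimension down from $7$ to $4$, I would obtain a graded quotient $R_X$ whose $\operatorname{Proj}$ is the candidate $X\subset\mathbb{P}(1^8,2,3)$.

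The core structural claim, that $X$ is projectively Gorenstein, I would deduce from Theorem~\ref{main!thm1}: the specialization must be arranged as a regular sequence of linear (or suitably general) forms on $R_{un}$, so that Gorensteinness descends to the quotient. Concretely I would check that the $4$ cutting equations form a regular sequence on $R_{un}$ (equivalently, that codimension drops by the expected amount at each step), which keeps the quotient Cohen--Macaulay and preserves the Gorenstein property; projective normality then follows from the quasismoothness together with the codimension $6$ arithmetically Gorenstein structure. The Hilbert series $P_X(t)$ I would compute from the graded free resolution of $I_{un}$: the numerator $1-6t^2+15t^4-20t^6+15t^8-6t^{10}+t^{12}$ is the alternating sum of twists in the (symmetric, self-dual) resolution of a codimension $6$ Gorenstein ideal, and the denominator $(1-t)^8(1-t^2)(1-t^3)$ records the $8$ weight-$1$, one weight-$2$, and one weight-$3$ generators of the coordinate ring of $\mathbb{P}(1^8,2,3)$; matching these against the numerical data in the Graded Ring Database entry $29376$ confirms the identification.

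Finally I would address the singularities. Away from the coordinate point where the weight-$3$ variable is the only nonzero coordinate, I would verify quasismoothness by checking that the affine cone minus the origin is smooth, i.e.\ that the Jacobian of a generating set of $I_{un}$ (after specialization) has the expected rank along the relevant strata; this is exactly the kind of computation I would delegate to Macaulay2 and Singular. At the distinguished point I would identify the local structure as the cyclic quotient singularity $\frac{1}{3}(1,1,2)$ by examining the $\mathbb{Z}/3$-action induced by the weight-$3$ grading on the tangent directions, reading off the three weights $(1,1,2)$ of the transverse coordinates. The main obstacle I expect is the last one in tandem with quasismoothness: ensuring the specialized $c_i$ and the cutting forms are generic enough that the only singularity is the single prescribed quotient point and no extra non-quasismooth locus appears. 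This is a genericity/transversality statement that is hard to see by hand because of the size of the $20$-generator ideal $I_{un}$, so it is precisely where the verification with Macaulay2 and Singular is indispensable, and where the detailed construction of Subsection~\ref{constr!29376} must do the real work.
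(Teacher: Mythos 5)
Your overall skeleton matches the paper's: specialize the $c_i$ to general constants, re-grade so that $x_2,x_4,x_6$ get weight $2$ (whence $T_2,T_3,T_4$ get weight $1$ and $T_1$ weight $3$), and delegate primality, quasismoothness, the singularity analysis and the graded resolution to Macaulay2/Singular. But the central step by which you propose to reach $\mathbb{P}(1^8,2,3)$ --- ``imposing linear relations among the remaining variables to cut the Krull dimension down from $7$ to $4$'' --- is not what happens and would not work. After specializing the six $c_i$ the ring $\hat{R}_{un}=\hat{R}[T_1,\dots,T_4]/\hat{I}_{un}$ is \emph{already} $4$-dimensional, with $\operatorname{Proj}\hat{R}_{un}\subset\mathbb{P}(1^6,2^3,3)$; no further dimension reduction is needed or possible. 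Moreover, no sequence of linear sections can convert the ambient weights $(1^6,2^3,3)$ into $(1^8,2,3)$: a section removes variables, whereas the target has \emph{two more} weight-$1$ variables and \emph{two fewer} weight-$2$ variables than the natural model.

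What the paper actually does at this step is a substitution (a ``regular pullback''), not a quotient by a regular sequence: it adjoins two new weight-$1$ variables $w_1,w_2$, forms $A=k[w_1,w_2,T_2,T_3,T_4,x_1,x_3,x_5,x_6,T_1]$, and defines $\psi\colon\hat{R}[T_1,\dots,T_4]\to A$ fixing all variables except that $x_2\mapsto f_1$ and $x_4\mapsto f_2$ for \emph{general} weight-$2$ forms $f_1,f_2$ in the ten variables of $A$; then $X=V(Q)$ with $Q=(\psi(\hat{I}_{un}))$. This is a generically finite cover of $\operatorname{Proj}\hat{R}_{un}$, not a linear section of it, so Gorensteinness of $A/Q$ is not ``inherited essentially for free'' by cutting with a regular sequence as you suggest; the paper instead recomputes the minimal graded free resolution of $A/Q$ (Proposition~\ref{prop!gradres1}) to verify $\omega_{A/Q}\cong (A/Q)(-1)$ and to extract the Hilbert series --- which, note, differs from that of $\hat{R}_{un}$ precisely because the embedding has changed. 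Your description of the remaining verifications (quasismoothness of the affine cone away from the vertex, the local elimination around the point with $T_1=1$ exhibiting the $\frac13(1,1,2)$ point, and the indispensability of the computer checks for genericity) is consistent with the paper, but without the $\psi$-substitution the object you construct is not the one in the statement.
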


 \label{constr!11}

 The second construction is summarised in the following theorem. It corresponds to the
entry $9176$ of Graded Ring Database. More details for the construction are given in Subsection~\ref{constr2!9176}.

\begin{theorem}\label{constr!9176}
There exists a family of quasismooth, projectively normal and projectively 
Gorenstein Fano $3$-folds $X\subset \mathbb{P}(1^2, 2^5,3^3)$, nonsingular away from 
eight quotient singularities $\frac{1}{2}(1,1,1)$, with Hilbert series

\begin{center}
$P_{X}(t)=\frac{1-6t^4-8t^5+2t^6+24t^7+21t^8-16t^9-36t^{10}-16t^{11}+21t^{12}+24t^{13}+2t^{14}-8t^{15}-6t^{16}+t^{20}}{(1-t)^2(1-t^2)^5(1-t^3)^3}.$
\end{center}
\end{theorem}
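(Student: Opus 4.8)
The plan is to realise $X$ as $\operatorname{Proj}$ of a suitable graded quotient of the Gorenstein ring $R_{un}$ produced in Theorem~\ref{main!thm1}, and then to read off its geometry. First I would fix a grading on the coordinates $x_1,\dots,x_6,T_1,\dots,T_4$ whose weight vector is $(1^2,2^5,3^3)$ and for which every generator of $I_{un}$ listed in Definition~\ref{def!mrin1} is homogeneous; concretely this means choosing the weights so that in each relation $T_tx_j-\phi_t(x_j)$ and $T_sT_t-A_{st}$ the two sides have equal weight, which imposes the linear conditions $\deg T_t+\deg x_j=\deg\phi_t(x_j)$ and $\deg T_s+\deg T_t=\deg A_{st}$. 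I would then specialise the parameters $c_i$ to explicit scalars $d_i\in k$. Since this cuts $R_{un}$ by the six forms $c_i-d_i$ and drops the Krull dimension from $10$ to $4$, the resulting coordinate ring $S$ of $X$ has dimension $4$, so $X\subset\mathbb{P}(1^2,2^5,3^3)$ has dimension $3$ and codimension $6$.

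The projective Gorenstein property would follow from Theorem~\ref{main!thm1}: the forms $c_i-d_i$ should form a regular sequence on $R_{un}$ (equivalently, the specialisation is flat and the codimension stays $6$), so $S$ is again Gorenstein, whence $X$ is arithmetically Gorenstein and in particular arithmetically Cohen--Macaulay. For the Hilbert series I would use that the minimal graded free resolution of $S$ over the polynomial ring in the ten coordinates has length $6$ and, by Gorenstein duality, is self-dual; the numerator of $P_X(t)$ is then the alternating sum of its graded Betti numbers and is a palindromic polynomial. One checks it equals the stated degree-$20$ numerator: its palindromicity about $t^{10}$ encodes the self-duality, and the identity (numerator degree) $=\sum w_i-1=20$ records the $a$-invariant $-1$, i.e.\ $\omega_X\cong\mathcal{O}_X(-1)$, so that $X$ carries the anticanonical polarisation $-K_X=\mathcal{O}_X(1)$, as befits a Fano $3$-fold. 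In practice I would confirm the numerator directly in Macaulay2~\cite{GS}.

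It remains to establish quasismoothness and to locate the singularities. I would pass to the punctured affine cone $\widehat{X}\setminus\{0\}$ and apply the Jacobian criterion to the $20$ generators: quasismoothness of $X$ is exactly the smoothness of $\widehat{X}$ away from the vertex, which I would check with Macaulay2~\cite{GS} and Singular~\cite{GPS01}. For the orbifold structure I would intersect $X$ with the singular strata of $\mathbb{P}(1^2,2^5,3^3)$ — the coordinate loci where only coordinates of weight $\geq 2$ are nonzero — and show that $X$ meets the relevant weight-$2$ stratum in exactly eight reduced points while avoiding the weight-$3$ singular locus; near each of these points the local model is the cyclic quotient $\frac{1}{2}(1,1,1)$, and $X$ is nonsingular elsewhere. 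Finally, $X$ is well-formed, and being quasismooth and arithmetically Cohen--Macaulay it is projectively normal, which completes the asserted list. For the background on quasismoothness, well-formedness and these singularity computations I follow \cite[Section~3]{PV}.

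The main obstacle is twofold and essentially computational. First, one must actually produce the explicit specialisation: the scalars $d_i$ and the weight assignment have to be chosen simultaneously so that all $20$ generators stay homogeneous for the weights $(1^2,2^5,3^3)$, so that the ideal keeps codimension $6$ (whence flatness and the Gorenstein property descend), and so that the resulting $X$ acquires only the eight mild quotient points and nothing worse; this is delicate, since fully generic $d_i$ do not yield this weight system, and the specific choices must force the appropriate coefficients of the $\phi_t$ and $A_{st}$ to vanish. Second, the quasismoothness and the local identification of the eight $\frac{1}{2}(1,1,1)$ points require the explicit Jacobian and tangent-cone analysis on the cone, which is precisely where the computer algebra systems are indispensable. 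Once the explicit ring is in hand, the Gorenstein property and the Hilbert series are comparatively formal consequences of Theorem~\ref{main!thm1}.
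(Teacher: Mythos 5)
Your outline of the verification steps (Gorenstein via Theorem~\ref{main!thm1}, Hilbert series from the length-$6$ self-dual resolution with $a$-invariant $-1$, quasismoothness via the Jacobian criterion on the punctured cone, and locating the eight $\frac{1}{2}(1,1,1)$ points by intersecting with the singular strata) matches the paper. But the construction step at the start has a genuine gap: no grading of the ten unprojection coordinates $x_1,\dots,x_6,T_1,\dots,T_4$ can realise the weight vector $(1^2,2^5,3^3)$. Homogeneity of $f=c_1x_1x_2+c_2x_3x_4+c_3x_5x_6$ forces $\deg x_1+\deg x_2=\deg x_3+\deg x_4=\deg x_5+\deg x_6=d$, and homogeneity of the relations $T_tx_j-\phi_t(x_j)$ forces $\deg T_t=2d-\sigma_t$, where $\sigma_t$ is the sum of the weights of the three generators of $J_t$. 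Since each $x_i$ lies in exactly two of the $J_t$, one gets $\sum_t\deg T_t=8d-6d=2d$, so the total weight of the ten coordinates is $5d$; but the weights $(1^2,2^5,3^3)$ sum to $21$, which is not divisible by $5$. Your fallback of degenerating the $c_i$ to make extra terms vanish does not escape this (the constraint comes from $f$, $g$ themselves, and the format requires all $c_i\neq 0$ for the codimension computations of Proposition~\ref{prop!codim2}).

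The missing idea is the change of ambient space via the homomorphism $\psi$. The paper keeps the $c_i$ general, takes the grading $\deg x_1=\deg x_3=\deg x_5=2$, $\deg x_2=\deg x_4=\deg x_6=3$, $\deg T_2=\deg T_3=\deg T_4=2$, $\deg T_1=4$, so that $\operatorname{Proj}\hat{R}_{un}$ sits in $\mathbb{P}(2^6,3^3,4)$, and then maps $\hat{R}[T_1,\dots,T_4]$ to $A=k[w_1,w_2,x_1,x_5,T_2,T_3,T_4,x_2,x_4,x_6]$ by adjoining two new degree-$1$ variables $w_1,w_2$ and substituting $x_3\mapsto f_1$, $T_1\mapsto f_2$ with $f_1,f_2$ general forms of degrees $2$ and $4$. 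Geometrically this is a cone over the unprojected $3$-fold cut by two general hypersurfaces, and it is exactly this step that lands $X=V(Q)$ in $\mathbb{P}(1^2,2^5,3^3)$ with the required numerics; without it the theorem as stated is not reached.
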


 The third construction is summarised in the following theorem. It corresponds to the
entry $24198$ of Graded Ring Database. More details for the construction are given in Subsection~\ref{constr3!24198}.

\begin{theorem}\label{constr!24198}
There exists a family of quasismooth, projectively normal and projectively 
Gorenstein Fano $3$-folds $X\subset \mathbb{P}(1^6, 2^3,3)$, nonsingular away from 
two quotient singularities $\frac{1}{2}(1,1,1)$ and one quotient singularity $\frac{1}{3}(1,1,2)$, with Hilbert series

\begin{center}
$P_{X}(t)=\frac{1-t^2-10t^3+5t^4+24t^5-5t^6-28t^7-5t^8+24t^9+5t^{10}-10t^{11}-t^{12}+t^{14}}{(1-t)^6(1-t^2)^3(1-t^3)}.$
\end{center}
\end{theorem}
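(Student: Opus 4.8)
The plan is to realize $X$ as $\operatorname{Proj}$ of a suitable weighted specialization of the codimension~$6$ Gorenstein ring $R_{un}$ of Theorem~\ref{main!thm1}. First I would fix a weighting of the ten variables $x_1,\dots,x_6,T_1,\dots,T_4$ that makes the twenty generators of $I_{un}$ weighted-homogeneous and realizes the ambient space as $\mathbb{P}(1^6,2^3,3)$. Up to the evident symmetry the weights are forced by the homogeneity conditions $\deg(x_1x_2)=\deg(x_3x_4)=\deg(x_5x_6)$ coming from $f,g$, together with $\deg T_t+\deg x_j=\deg\phi_t(x_j)$ and $\deg T_s+\deg T_t=\deg A_{st}$, so solving this linear system pins them down. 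I would then specialize the parameters $c_i$ to a point $(d_1,\dots,d_6)\in\mathcal{U}$ through the homomorphism $\hat\phi$ of Subsection~\ref{subs!mainres2to6} (extended to fix the $T_i$), obtaining the $4$-dimensional graded quotient $S=\hat R[T_1,\dots,T_4]/\widehat{I_{un}}$ of Subsection~\ref{constr3!24198}, where $\widehat{I_{un}}$ is the image of $I_{un}$, and set $X=\operatorname{Proj} S\subset\mathbb{P}(1^6,2^3,3)$. Letting $(d_1,\dots,d_6)$ vary over $\mathcal{U}$ produces the asserted family.

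Next I would establish that $X$ is projectively Gorenstein and compute its Hilbert series. Since $R_{un}$ is Gorenstein (Theorem~\ref{main!thm1}) of Krull dimension $10$, it suffices to check that the six elements $c_1-d_1,\dots,c_6-d_6$ form a regular sequence on $R_{un}$; as $R_{un}$ is Cohen--Macaulay and $\dim S=4=10-6$, this is equivalent to the specialization being flat, which I would verify for generic $(d_1,\dots,d_6)$. Flatness then carries the graded free resolution of $R_{un}$ over to one of $S$, so the denominator of $P_X$ is the product $\prod_i(1-t^{w_i})$ over the ten weights $w_i$, namely $(1-t)^6(1-t^2)^3(1-t^3)$, and the palindromic numerator is the alternating sum of the twisted Betti numbers; matching it against the stated polynomial, and hence against the entry $24198$, is a finite check conveniently confirmed in Macaulay2. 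The $a$-invariant equals $\deg N-\sum_i w_i=14-15=-1$, so $\omega_X\cong\mathcal{O}_X(-1)$.

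I would then turn to the geometry of $X$. Quasismoothness amounts to smoothness of the punctured affine cone $\widehat X\setminus\{0\}$, which I would test by the Jacobian criterion applied to the twenty specialized generators, a computation best performed in Singular or Macaulay2. Singular points of $X$ can occur only where the coordinates of weight $>1$ vanish, that is along the strata of $\mathbb{P}(1^6,2^3,3)$ cut out by the weight-$2$ and weight-$3$ coordinates; restricting the equations to those strata and analysing the local orbifold charts, I would locate exactly two cyclic quotient singularities of type $\frac{1}{2}(1,1,1)$ and one of type $\frac{1}{3}(1,1,2)$, and confirm smoothness elsewhere. Projective normality then follows from Serre's criterion: $S$ is Cohen--Macaulay, hence $S_2$, and its singular locus has codimension $\ge 2$, hence $R_1$. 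Finally, because $\omega_X\cong\mathcal{O}_X(-1)$ with $\mathcal{O}_X(1)$ ample on $\operatorname{Proj} S$, the variety $X$ is an anticanonically embedded Fano $3$-fold.

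The main obstacle, I expect, is twofold and genuinely computational. First, one must confirm that $c_1-d_1,\dots,c_6-d_6$ is a regular sequence for a generic point of $\mathcal{U}$, since otherwise neither Gorensteinness nor the Hilbert series need descend from $R_{un}$ to $S$; a Bertini-type argument combined with a flatness or dimension count should settle this. Second, and more delicate, is the explicit verification of quasismoothness together with the precise local identification of the three quotient singularities: the Jacobian rank must be computed along each singular stratum and the orbifold types $\frac{1}{2}(1,1,1)$ and $\frac{1}{3}(1,1,2)$ read off from the tangent data. This is exactly the step where the computer algebra systems are indispensable.
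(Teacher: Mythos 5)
Your overall verification strategy (regular sequences/flatness for descent of Gorensteinness, the graded resolution for the Hilbert series and the $a$-invariant, Jacobian criterion on the punctured cone, analysis of the singular strata, Serre's criterion) matches the paper's methodology. But there is a genuine gap at the very first step: the object you construct is not the variety of entry $24198$. If you insist on grading only the ten variables $x_1,\dots,x_6,T_1,\dots,T_4$ and specializing all six $c_i$ to constants, the homogeneity constraints do pin the weights down, but to the wrong place. Homogeneity of $f,g$ forces $\deg x_1+\deg x_2=\deg x_3+\deg x_4=\deg x_5+\deg x_6=d$, the relations $\deg T_t=2d-\sum_{x_j\in J_t}\deg x_j$ give $\sum_t\deg T_t=2d$, and matching the total $5d$ against $6\cdot 1+3\cdot 2+1\cdot 3=15$ forces $d=3$; up to the symmetry of the format this is exactly the grading used for the \emph{intermediate} object of Subsection~\ref{constr!29376}, namely $\operatorname{Proj}\hat{R}_{un}\subset\mathbb{P}(1^6,2^3,3)$ with six generators of degree $2$, eight of degree $3$ and six of degree $4$, hence Hilbert numerator $1-6t^2+15t^4-\cdots+t^{12}$ and $a$-invariant $12-15=-3$. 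That is not the stated series $1-t^2-10t^3+\cdots+t^{14}$ (one generator of degree $2$, ten of degree $3$, seven of degree $4$, two of degree $5$), and your own computation $a=14-15=-1$ silently assumes the socle degree $14$, which your grading cannot produce.

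The construction the paper actually uses for $24198$ is different in two essential ways. First, only $c_1,c_2,c_4,c_5$ are specialized; $c_3$ and $c_6$ are retained as coordinate variables of degree $1$, so the intermediate ring $\hat{R}_{un}$ lives in $\mathbb{P}(1^7,2^4,3)$ (with $x_1,x_3,x_5,x_6,c_3,c_6$ of degree $1$, $x_2,x_4,T_2,T_3$ of degree $2$, $T_4$ of degree $1$, $T_1$ of degree $3$) and $\operatorname{Proj}$ of it is a $5$-fold, not a $3$-fold. Second, one then eliminates two of the unprojection variables by substituting $T_2\mapsto f_1$ and $T_4\mapsto f_2$ with $f_1,f_2$ general forms of degrees $2$ and $1$ in the remaining variables; this cuts the $5$-fold down to a $3$-fold in $\mathbb{P}(1^6,2^3,3)$ and produces exactly the generator degrees $(2)^1,(3)^{10},(4)^7,(5)^2$ of Proposition~\ref{prop!gradres3}. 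The family is then parametrized by the general constants $c_1,c_2,c_4,c_5$ together with the coefficients $l_1,\dots,l_{30}$ of $f_1,f_2$, not by $(d_1,\dots,d_6)\in\mathcal{U}$ alone. Without this extra layer (keeping two of the $c_i$ as variables and taking two further general sections), the remainder of your argument, however correctly executed, verifies the wrong Hilbert series and the wrong singularity configuration.
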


\subsection{ Construction of Graded Ring Database entry with ID: 29376} \label{constr!29376}
In this subsection, we give the details of the construction for the family described in Theorem \ref{constr!29376}.

Denote by $k=\mathbb{C}$ the field of complex numbers. Consider the 
polynomial ring $R= k[x_i, c_i]$, where $1\leq  i \leq 6$.  Let  $ R_{un}$ be the 
ring in Definition \ref{def!mrin1} and  $\hat{R}=k[x_1,\dots, x_6]$ be the polynomial ring in the variables $x_i$.
We substitute the variables $(c_1, \dots, c_{6})$ which appear in the definitions of the rings $R$ 
and $R_{un}$ with a general element of $k^{6}$ (in the sense of being outside a 
proper Zariski closed subset of $k^{6}$).   Let $\hat{I}$ be the ideal of $\hat{R}$ which is 
obtained by the ideal $I$ and $\hat{ I}_{un}$ the ideal of $\hat{R}[T_1,T_2,T_3,T_4]$ which is 
obtained by the ideal $I_{un}$  after this substitution. We set $\hat{R}_{un}= \hat{R}[T_1,T_2,T_3,T_4]/\hat{ I}_{un}$. In 
what follows $x_1, x_3, x_5$ are variables of degree $1$ and $x_2, x_4, x_6$ are variables of degree $2$. Hence, 
from the discussion before the Proposition \ref{prop!hom1} it follows that the 
degrees of $T_2, T_3, T_4$ are equal to $1$ and the degree of $T_1$ is equal to $3$. According to 
this grading the ideals $\hat{I}$ and $\hat{ I}_{un}$ are homogeneous. Due to 
Theorem~\ref{main!thm1},   $\text{Proj} \   \hat{R}_{un}\subset \mathbb{P} (1^{6}, 2^3, 3)$ is a projectively Gorenstein $3$-fold.

Let $A= k[w_{1}, w_{2}, T_2, T_3, T_4, x_1, x_3, x_5, x_6, T_1]$  be the polynomial ring 
over $k$ with $w_{1}, w_{2}$ variables of degree $1$ and the other variables of 
degree noted as above. Consider the unique $k$-algebra homomorphism
\begin{center}
$\psi\colon \hat{R}[T_1, T_2, T_3, T_4]\rightarrow A$
\end{center}
such that
\begin{center}
$\psi(x_1)= x_1$, \, $\psi(x_2)= f_1$, \,   $\psi(x_3)= x_3$, \, $\psi(x_4)= f_2$,
\end{center}
\begin{center}
$\psi(x_5)= x_5$, \, $\psi(x_6)= x_6$, \, $\psi(T_1)= T_1$, \,  $\psi(T_2)= T_2$,
\end{center}
\begin{center}
$\psi(T_3)= T_3$, \, $\psi(T_4)= T_4$
\end{center}
where,

$f_1= l_1x_1^2+ l_2x_1x_3+ l_3x_3^2+ l_4x_1x_5+ l_5x_3x_5+ l_6x_5^2+ l_7x_1T_2+ l_8x_{3}T_2+ l_9x_{5}T_{2}+  l_{10}T_{2}^2+ l_{11}x_{1}T_3+ l_{12}x_{3}T_3+ l_{13}x_{5}T_3+ l_{14}T_{2}T_3+ l_{15}T_3^2+l_{16}x_{1}T_4+
+ l_{17}T_{3}T_{4}+ l_{18}x_{5}T_4+l_{19}T_2T_4+l_{20}T_3T_4+l_{21}T_4^2+l_{22}x_1w_1+l_{23}x_3w_1+
l_{24}x_5w_1+l_{25}T_2w_1+l_{26}T_3w_1+l_{27}T_4w_1+ l_{28}w_1^2+l_{29}x_1w_2+l_{30}x_3w_2+l_{31}x_5w_2+
l_{32}T_2w_2+l_{33}T_3w_2+l_{34}T_4w_2+ l_{35}w_1w_2+l_{36}w_2^2+l_{37}x_6$,

$f_2= l_{38}x_1^2+ l_{39}x_1x_3+ l_{40}x_3^2+ l_{41}x_1x_5+ l_{42}x_3x_5+ l_{43}x_5^2+ l_{44}x_1T_2+ l_{45}x_{3}T_2+ l_{46}x_{5}T_{2}+  l_{47}T_{2}^2+ l_{48}x_{1}T_3+ l_{49}x_{3}T_3+ l_{50}x_{5}T_3+ l_{51}T_{2}T_3+ l_{52}T_3^2+l_{53}x_{1}T_4+l_{54}T_{3}T_{4}+l_{55}x_{5}T_4+l_{56}T_2T_4+l_{57}T_3T_4+l_{58}T_4^2+l_{59}x_1w_1+l_{60}x_3w_1+
l_{61}x_5w_1+l_{62}T_2w_1+l_{63}T_3w_1+l_{64}T_4w_1+ l_{65}w_1^2+l_{66}x_1w_2+l_{67}x_3w_2+l_{68}x_5w_2+
l_{69}T_2w_2+l_{70}T_3w_2+l_{71}T_4w_2+ l_{72}w_1w_2+l_{73}w_2^2+l_{74}x_6$,

\noindent and   $(l_1,\dots, l_{74})\in k^{74}$ are general. In other words, $f_1, f_2$ are two general degree $2$ homogeneous  elements of $A$.

Denote by $Q$ the ideal of the ring A generated by the subset   $\psi(\hat{I}_{un})$.

Let $X= V(Q)\subset \mathbb{P} (1^{8}, 2, 3)$. It is immediate that  $X\subset \mathbb{P}(1^8, 2, 3)$ is a codimension $6$ projectively Gorenstein $3$-fold.

\begin{proposition} \label{constr1!primeno2}
The ring  $A/Q$ is an integral domain.
\end{proposition}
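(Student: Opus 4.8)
The plan is to deduce that $A/Q$ is an integral domain from the combination of two structural facts: that it is a \emph{normal} ring, and that its spectrum is \emph{connected}. Since a connected normal ring is a domain, this will suffice. The starting observation, already recorded just before the statement, is that $X\subset\mathbb{P}(1^8,2,3)$ is a codimension $6$ projectively Gorenstein $3$-fold obtained from Theorem~\ref{main!thm1} via $\psi$; thus $A/Q$ is a $4$-dimensional Gorenstein graded $k$-algebra. In particular $A/Q$ is Cohen--Macaulay and hence automatically satisfies Serre's condition $(S_2)$, so by Serre's normality criterion (see \cite[Section~2.2]{BH}) the whole problem reduces to verifying condition $(R_1)$: that the non-regular locus of $\operatorname{Spec}(A/Q)$ has codimension at least $2$.

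To control this locus I would use the Jacobian criterion. The singular locus of $\operatorname{Spec}(A/Q)$ is cut out by $Q$ together with the ideal of $(\operatorname{codim} Q)$-minors of the Jacobian matrix of a generating set of $Q$, and one has $\dim A/Q=\dim A-\operatorname{codim}Q=10-6=4$, so $(R_1)$ amounts to this singular locus having dimension at most $2$. The decisive feature is that the defining data are \emph{general}: the specialized values $d_i$ and the coefficients $l_1,\dots,l_{74}$ of the general quadrics $f_1,f_2$ vary in a parameter space $\mathbb{A}^{80}$. I would exploit this by picking one explicit specialization of all these coefficients and checking, with Macaulay2 \cite{GS} and Singular \cite{GPS01}, that for this choice the singular locus is supported only at the irrelevant (vertex) point, hence has dimension $0$. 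Because the dimension of the fibers of the total singular-locus scheme over the parameter space $\mathbb{A}^{80}$ is upper semicontinuous (Chevalley), the set of parameters for which the singular locus has dimension at most $2$ is Zariski open; it is nonempty by the computation, therefore dense, and so $(R_1)$ holds for the general coefficients used in the construction.

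Granting $(R_1)$, Serre's criterion yields that $A/Q$ is normal, that is, a finite direct product of normal integral domains. To isolate a single factor I would use that $A/Q$ is a nonnegatively graded $k$-algebra with $(A/Q)_0=k$: every minimal prime of $A/Q$ is homogeneous and hence contained in the irrelevant maximal ideal $\mathfrak{m}=(A/Q)_+$, so all irreducible components of $\operatorname{Spec}(A/Q)$ pass through the single point $\mathfrak{m}$, and $\operatorname{Spec}(A/Q)$ is connected. A connected normal ring admits exactly one such component, so $A/Q$ is a normal integral domain, which is the assertion.

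The main obstacle is the verification of $(R_1)$: choosing a specialization that is generic enough to represent the open stratum yet concrete enough to make the Jacobian and Gröbner-basis computations feasible, and certifying that the singular locus collapses to the vertex. As an alternative, purely computational route one could instead check directly that $Q$ is prime for a single explicit generic specialization and transfer primality to the general member by the analogous semicontinuity argument applied to geometric integrality of the fibers; I would nonetheless keep the normality argument above as the conceptual backbone, since it reduces everything to one clearly delimited computation while deriving the remaining steps structurally.
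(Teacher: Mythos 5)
Your proof is correct, but it takes a genuinely different route from the paper's. The paper's own proof is the ``alternative, purely computational route'' you mention at the end: for a single explicit rational specialization of the parameters $c_i,l_j$ it verifies with Macaulay2 that the resulting ideal is a homogeneous codimension~$6$ prime with the expected Betti table, and tacitly transfers primality to the general member. Your main argument instead derives integrality structurally: projective Gorensteinness gives $(S_2)$; a quasismoothness computation (which the paper needs anyway, as Proposition~\ref{Prop!quasismooth1}) gives $(R_1)$; Serre's criterion gives normality; and the graded connectedness argument (all minimal primes are homogeneous, hence contained in the irrelevant maximal ideal) isolates a single factor. This buys two things: the only computer verification required is one the paper must perform regardless for the singularity analysis, so the separate primality computation becomes redundant; and the property transferred from the special to the general fibre --- emptiness of the projective singular locus --- has cleaner openness behaviour (upper semicontinuity of fibre dimension for a proper morphism over the parameter space) than geometric integrality of fibres, which is what the paper's implicit transfer relies on. Two points should be made explicit to close your argument. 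First, Gorensteinness of $A/Q$ in dimension $4$ amounts to $x_2-f_1,\,x_4-f_2$ being a regular sequence on $\hat{R}_{un}[w_1,w_2]$; the paper calls this ``immediate'' just before the statement, but it is itself part of the codimension-$6$ count and deserves a word. Second, for the rank-$6$ Jacobian locus to compute the non-smooth locus of $\operatorname{Spec}(A/Q)$ away from the vertex you need equidimensionality and absence of embedded primes, which you do get from Cohen--Macaulayness but should invoke explicitly. Both are minor and fixable, so the approach goes through.
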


\begin{proof}
It is enough to show that the ideal $Q$ is prime. For a specific choice of rational
values for the parameters $c_i,  l_j$, for $1\leq i\leq 6$ and $1\leq j\leq 74$ we checked, using the computer
algebra program Macaulay2, that the ideal which was obtained by specialization from $Q$  is a homogeneous, codimension~$6$, prime ideal with the right Betti table. 
\end{proof}

In what follows, we show that the only singularities of $X\subset \mathbb{P}(1^8, 2, 3)$  is a quotient singularity of type $\frac{1}{3}(1,1,2)$. According to the discussion after  
 \cite [Definition~2.7]{PV}, $X$ belongs to the Mori category. 

The proof of the following proposition is based on a computation with computer algebra system Singular \cite{GPS01} using the strategy described in \cite [Proposition~6.4]{PV} and is omitted.

\begin{proposition}\label{Prop!quasismooth1}
Consider   $X= V(Q)\subset \mathbb{P} (1^{8},2, 3)$. Denote by  $X_{cone}\subset \mathbb{A}^{10}$ the affine cone over $X$. 
The scheme  $X_{cone}$ is smooth outside the vertex of the cone.
\end{proposition}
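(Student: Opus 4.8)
The plan is to establish the statement through the Jacobian criterion applied to the affine cone, thereby reducing the geometric assertion to the computation of a singular locus. Since $X\subset \mathbb{P}(1^8,2,3)$ is a codimension~$6$ three-fold, its affine cone $X_{cone}=V(Q)\subset \mathbb{A}^{10}=\operatorname{Spec} A$ is a four-dimensional variety of codimension~$6$, which is moreover irreducible by Proposition~\ref{constr1!primeno2}. By the Jacobian criterion, a point $p\in X_{cone}$ is smooth if and only if the Jacobian matrix $\mathrm{Jac}$ of a generating set of $Q$ has rank exactly $6$ at $p$; equivalently, the singular locus is the closed subscheme $V(Q+\mathcal{J})$, where $\mathcal{J}\subset A$ is the ideal generated by the $6\times 6$ minors of $\mathrm{Jac}$. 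Writing $\mathfrak{m}=(w_1,w_2,T_2,T_3,T_4,x_1,x_3,x_5,x_6,T_1)$ for the irrelevant maximal ideal defining the vertex of the cone, the claim \emph{smooth outside the vertex} is exactly the statement that $V(Q+\mathcal{J})\subseteq\{0\}$, that is, $\sqrt{Q+\mathcal{J}}=\mathfrak{m}$ (equivalently, the saturation $(Q+\mathcal{J}):\mathfrak{m}^{\infty}$ is the unit ideal).

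First I would assemble the $20$ generators of $Q=\psi(\hat I_{un})$ explicitly and form the associated $20\times 10$ Jacobian matrix over $A$. Then I would compute the ideal $\mathcal{J}$ of its $6\times 6$ minors, adjoin it to $Q$, and determine the vanishing locus of $Q+\mathcal{J}$, certifying in Singular that this locus is supported only at the origin by checking that the radical of $Q+\mathcal{J}$ equals $\mathfrak{m}$, or that the saturation with respect to $\mathfrak{m}$ is trivial. Because the assertion concerns a \emph{general} member of the family, the computation is carried out for one explicit rational choice of the parameters $c_i$ and $l_j$: the locus in parameter space on which $X_{cone}$ is smooth away from the vertex is Zariski open, so exhibiting a single rational specialization lying in it shows that this open locus is nonempty and hence contains the general point. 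This semicontinuity argument is what legitimates replacing the general fibre by a finite computation, precisely in the spirit of \cite[Proposition~6.4]{PV}.

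The main obstacle is computational rather than conceptual. The Jacobian of $20$ generators in $10$ variables is a large matrix, and there are $\binom{20}{6}\binom{10}{6}$ maximal minors of the relevant size, so a direct minor-and-radical computation over $\mathbb{Q}$ is infeasible. To make it tractable I would (i) specialize the parameters to small integers, (ii) carry out the minor, saturation, and radical computations modulo a well-chosen prime, using that smoothness of the reduction at a prime of good reduction forces smoothness in characteristic zero, and (iii) pass to the standard open charts of the cone, inverting one coordinate at a time so that away from the vertex the effective number of minors to test drops substantially. A secondary point requiring care is verifying that the chosen rational specialization genuinely lies in the good open locus; should the first choice exhibit spurious singularities, one simply re-specializes, and this is routine once a working choice has been found.
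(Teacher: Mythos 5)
Your proposal is correct and follows essentially the same route as the paper: the paper omits the proof, stating only that it is a Singular computation following the strategy of \cite[Proposition~6.4]{PV}, which is precisely the Jacobian-criterion-on-the-affine-cone argument combined with specialization of the parameters to an explicit rational point and the openness/semicontinuity argument you describe. Your additional remarks on making the minor and saturation computations tractable (reduction mod $p$, passing to charts) are sensible practical refinements of that same strategy.
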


\begin{remark}
For the computation of singular locus of weighted projective space in Proposition \ref{sing!specsing1},  we follow \cite [Section~5]{IF}. 
\end{remark}

\begin{proposition} \label{sing!specsing1}
Consider the singular locus 
\[
\text{Sing}( \mathbb{P}(1^{8}, 2, 3))= \{[0:0:0:0:0:0:0:0:1:0] \}\cup \{ [0:0:0:0:0:0:0:0:0:1]\} 
\]
 of the weighted projective space  $\mathbb{P}(1^8, 2,3)$. The intersection of $X$ with $\text{Sing}( \mathbb{P}(1^{8}, 2, 3))$ consists of a unique reduced point which is quotient singularity of type $\frac{1}{3}(1,1,2)$ for $X$.
\end{proposition}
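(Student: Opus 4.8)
The singular locus of $\mathbb{P}(1^8,2,3)$ consists of the two coordinate points $P_1=[0:\dots:0:1:0]$ (the weight-$2$ vertex) and $P_2=[0:\dots:0:0:1]$ (the weight-$3$ vertex), and we must analyze $X\cap\{P_1,P_2\}$.

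Let me sketch my plan.

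**The plan** is to determine, for each of the two singular points $P_1,P_2$ of the ambient weighted projective space, whether the point lies on $X$, and if so, to read off the local structure of $X$ there. The key observation is that these are the only two candidate singular points of $X$: by Proposition \ref{Prop!quasismooth1} the affine cone $X_{cone}$ is smooth away from its vertex, so $X$ is quasismooth, and hence the only possible singularities of $X$ are inherited from the ambient orbifold points, i.e. they can only occur at $X\cap\mathrm{Sing}(\mathbb{P}(1^8,2,3))$.

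**First**, I would handle $P_1$, the coordinate point where only the weight-$2$ variable (call it $z$) is nonzero. I would substitute into the generators of $Q$ and check whether $P_1\in X$; I expect that it does not, because one of the defining equations of $X$ should contain a pure power of $z$ with nonzero (general) coefficient, forcing $z=0$ and thus excluding $P_1$ from $V(Q)$. Concretely, since $\psi(x_2)=f_1$ and $f_1$ is a general degree-$2$ form containing the monomial $l_{37}x_6$ and others, one traces which generator of $\hat{I}_{un}$ maps under $\psi$ to an equation surviving at $P_1$; generality of the $l_j$ guarantees the relevant coefficient is nonzero. **Then** I would treat $P_2$, the weight-$3$ vertex where only the weight-$3$ variable $T_1$ is nonzero, and show $P_2\in X$ but that the point is reduced and isolated in the intersection.

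**The main computation** is the local analysis at $P_2$. Here I would pass to the affine orbifold chart at $P_2$: set $T_1=1$ and work modulo the cyclic group $\mu_3$ acting with weights $(1^8,2)$ on the remaining coordinates, which after reduction acts as $\frac{1}{3}(1,1,2)$ on the three-dimensional tangent directions that survive. I would use the defining equations to solve for the non-free local coordinates (the variables appearing linearly after setting $T_1=1$, via the unprojection relations $T_1 x_i-\phi_1(x_i)$ and the products $T_iT_1-A_{i1}$), thereby exhibiting $X$ locally near $P_2$ as the quotient of a smooth $3$-fold germ by $\mu_3$ acting as $\frac{1}{3}(1,1,2)$. **The hard part** will be verifying that, after eliminating the coordinates determined by the equations, exactly three free tangent directions remain with the stated weights $(1,1,2)$ and that the intersection is a single reduced point rather than something fatter or non-reduced; this is precisely where one checks that the Jacobian of the surviving equations has maximal rank at $P_2$ and that the residual action matches $\frac{1}{3}(1,1,2)$. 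As with the neighbouring Proposition \ref{Prop!quasismooth1} and Proposition \ref{constr1!primeno2}, I anticipate carrying out this rank and reducedness verification with a computer algebra computation in Macaulay2 or Singular at a general rational specialization of the parameters $c_i,l_j$, appealing to generality to conclude the same holds on the open dense locus.
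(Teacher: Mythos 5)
Your proposal is correct and follows essentially the same route as the paper: a computer-algebra verification (Macaulay2) that $X$ meets the singular locus only in a single reduced point at the weight-$3$ vertex, followed by setting $T_1=1$ and eliminating $x_1,x_3,x_5,T_2,T_3,T_4$ via the unprojection equations to leave the local coordinates $w_1,w_2,x_6$ with weights $(1,1,2)$ modulo $\mu_3$. The only cosmetic difference is that you propose to rule out the weight-$2$ vertex by an explicit hand argument, whereas the paper folds that into the computer check.
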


\begin{proof}
We checked with the computer algebra program 
Macaulay2 that the intersection of $X$ with $Z$ consists of one reduced point. We
denote this point by $P$. The point $P$ corresponds to the ideal $(x_i, T_j , w_k)$ for $i\in \{1, 3, 5, 6\}$, $2\leq j\leq~4$, $1\leq k\leq 2$. By Proposition \ref{Prop!quasismooth1}, X is smooth outside $P$. Around $P$ we have that $T_1 = 1$.
Looking at the equations of $Q$ we can eliminate the variables $x_1, x_3, x_5, T_2, T_3, T_4$ since these
variables appear in the set of equations multiplied by $T_1$. This means that $P$ is a quotient
singularity of type $\frac{1}{3}(1,1,2)$. 
\end{proof}

\begin{lemma}\label{lem!canonmod1}
Let $\omega_{\hat{R}/\hat{I}}$ be the canonical module of $\hat{R}/\hat{I}$. It holds that  the canonical module $\omega_{\hat{R}/\hat{I}}$ is isomorphic to $\hat{R}/\hat{I}(-3)$.
\end{lemma}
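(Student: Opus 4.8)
The plan is to exploit the fact, established in Proposition~\ref{prop!codim2}, that $\hat{I}$ is a homogeneous codimension~$2$ complete intersection, so that $\hat{R}/\hat{I}$ is a graded complete intersection quotient of the weighted polynomial ring $\hat{R}$. For such a quotient the canonical module is a single degree twist of the ring itself, and the entire content of the lemma reduces to computing that twist correctly for the grading in force. Concretely, I would invoke the standard description of the canonical module of a graded complete intersection (see e.g.\ \cite[Section~3.6]{BH}): for a homogeneous regular sequence $g_1,\dots,g_c$ in $\hat{R}=k[x_1,\dots,x_n]$ with $\deg g_j = b_j$ and $\deg x_i = a_i$, one has
\[
\omega_{\hat{R}/(g_1,\dots,g_c)} \cong (\hat{R}/(g_1,\dots,g_c))\Bigl(\textstyle\sum_{j} b_j - \sum_{i} a_i\Bigr).
\]

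First I would record the graded data. With the grading fixed in the present subsection, the variables $x_1, x_3, x_5$ have degree $1$ and $x_2, x_4, x_6$ have degree $2$, so that $\sum_{i=1}^{6}\deg x_i = 3\cdot 1 + 3\cdot 2 = 9$. The ideal $\hat{I}$ is generated by the images under $\hat{\phi}$ of $f$ and $g$, and every monomial occurring in $f$ and $g$ is one of $x_1x_2$, $x_3x_4$, $x_5x_6$, each of degree $1+2 = 3$; hence the two generators of $\hat{I}$ are homogeneous of degree $3$, and by Proposition~\ref{prop!codim2} they form a regular sequence.

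Substituting $c=2$, $b_1=b_2=3$ and $\sum_i a_i = 9$ into the formula above then yields
\[
\omega_{\hat{R}/\hat{I}} \cong (\hat{R}/\hat{I})\bigl((3+3)-9\bigr) = (\hat{R}/\hat{I})(-3),
\]
which is the assertion. Equivalently, one may resolve $\hat{R}/\hat{I}$ by the Koszul complex on the two degree-$3$ generators, whose top term is $\hat{R}(-6)$, and apply $\operatorname{Ext}^{2}_{\hat{R}}(-,\omega_{\hat{R}})$ together with $\omega_{\hat{R}}\cong \hat{R}(-9)$ to read off the same twist. The computation is entirely routine; the only point demanding care is the bookkeeping of the weighted degrees, in particular that the two generators of $\hat{I}$ really have degree $3$ for the grading used here rather than degree $2$ for the standard grading of Section~\ref{sec!4intformat}. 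Accordingly I do not expect any essential obstacle.
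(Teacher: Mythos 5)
Your proof is correct and follows essentially the same route as the paper: both arguments reduce to the Koszul resolution of the complete intersection $\hat{I}$ (top term $\hat{R}(-6)$) together with $\omega_{\hat{R}}\cong\hat{R}(-9)$ for the weighted grading, giving the twist $6-9=-3$. Your explicit check that the two generators are homogeneous of degree $3$ in the new grading is exactly the bookkeeping the paper's proof leaves implicit.
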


\begin{proof}
From the minimal graded free resolution of $\hat{R}/\hat{I}$ as  $\hat{R}$-module
\[
0\rightarrow \hat{R}(-6)\rightarrow \hat{R}(-3)^{2}\rightarrow \hat{R}  
\]
and the fact that the sum of the degrees of the variables is equal to $9$ we conclude that 
\[\omega_{\hat{R}/\hat{I}}= \hat{R}/\hat{I}(6-9)= \hat{R}/\hat{I}(-3).\] 
\end{proof}

\begin{proposition} \label{prop!gradres1}
The minimal graded resolution of $A/Q$ as $A$-module is equal to
\begin{equation} \label{eq!resR1}
0  \rightarrow  C_6  \rightarrow  C_5  \rightarrow  C_4  \rightarrow  C_3  \rightarrow  C_2  \rightarrow  C_1  \rightarrow  C_0 \rightarrow  0
\end{equation} 
where
\begin{align*}
  & C_6    =   A(-12),        \quad \quad    C_5 = A(-8)^{6}\oplus A(-9)^{8}   \oplus A(-10)^{6},   \\
  & C_4   =    A(-6)^{8}\oplus A(-7)^{24}\oplus A(-8)^{24}\oplus A(-9)^{8},         \\
  & C_3  =     A(-4)^{3}\oplus A(-5)^{24}\oplus A(-6)^{36}\oplus A(-7)^{24}\oplus  A(-8)^{3},  \\
  &  C_2 =   A(-3)^{8}\oplus A(-4)^{24}\oplus A(-5)^{24}\oplus A(-6)^{8},  \\
  &  C_1 =   A(-2)^{6}\oplus A(-3)^{8}\oplus A(-4)^{6}, \quad \quad   C_0 = A.  
\end{align*}

   Moreover, the canonical module of $A/Q$ is isomorphic to  $(A/Q)(-1)$ and the Hilbert series of  $A/Q$  as graded $A$-module is equal to 
  \[
      \frac{1-6t^2+15t^4-20t^6+15t^{8}-6t^{10}+t^{12}}{(1-t)^8(1-t^2)(1-t^3)}.
  \] 
\end{proposition}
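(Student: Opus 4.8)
The plan is to obtain the resolution of $A/Q$ by a \emph{flat graded base change} from the parallel unprojection ring $\hat{R}_{un}$, and then to read off the canonical module and the Hilbert series from the resulting self-dual resolution. Write $S=\hat{R}[T_1,T_2,T_3,T_4]$, so that $\psi\colon S\to A$ is a homomorphism of graded $k$-algebras; since $Q$ is generated by $\psi(\hat{I}_{un})$ we have $Q=\hat{I}_{un}A$ and hence $A/Q\cong A\otimes_S\hat{R}_{un}$, where $\hat{R}_{un}=S/\hat{I}_{un}$. First I would check that $\psi$ is graded of degree $0$: this is exactly the statement that $\psi(x_2)=f_1$ and $\psi(x_4)=f_2$ are homogeneous of degree $2$ (the common degree of $x_2,x_4$ in the construction grading), while all remaining variables are sent to variables of the same degree. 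If $A$ is flat over $S$ via $\psi$, then tensoring the minimal graded free resolution $F_\bullet$ of $\hat{R}_{un}$ over $S$ with $A$ yields a graded free resolution $F_\bullet\otimes_S A$ of $A/Q$, and because $\psi$ preserves degrees one has $F_i\otimes_S A=\bigoplus_j A(-j)^{\beta_{ij}}$, carrying exactly the graded Betti numbers $\beta_{ij}$ of $\hat{R}_{un}$.

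The next step is to establish this flatness. Set $B=k[x_1,x_3,x_5,x_6,T_1,T_2,T_3,T_4]$, so that $A=B[w_1,w_2]$ while the image subring is $\psi(S)=B[f_1,f_2]$, with $f_1,f_2$ general homogeneous forms of degree $2$ whose parts in $w_1,w_2$ are the nonzero generic quadrics $l_{28}w_1^2+l_{35}w_1w_2+l_{36}w_2^2$ and $l_{65}w_1^2+l_{72}w_1w_2+l_{73}w_2^2$. For general parameters $l_j$ the ten elements $x_1,x_3,x_5,x_6,T_1,T_2,T_3,T_4,f_1,f_2$ are algebraically independent, so $\psi$ is injective and $\psi(S)\cong S$ is a polynomial ring; eliminating $w_2$ between $f_1,f_2$ exhibits $w_1$ (and symmetrically $w_2$) as integral over $\psi(S)$, so $A$ is module-finite over $\psi(S)$. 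Since $A$ is a module-finite, Cohen–Macaulay extension of the regular ring $\psi(S)$ with zero-dimensional (hence equidimensional) fibres, the inclusion $\psi(S)\hookrightarrow A$ is flat by miracle flatness, indeed free of rank $4$ (the number of intersection points of two general conics in $w_1,w_2$). This gives exactness of $F_\bullet\otimes_S A$, and minimality is preserved because $\psi(\mathfrak m_S)\subseteq\mathfrak m_A$: the images $f_1,f_2$ are homogeneous of positive degree and so lie in $\mathfrak m_A$, whence all entries of the differentials of $F_\bullet\otimes_S A$ remain in $\mathfrak m_A$.

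It then remains to determine the graded Betti table of $\hat{R}_{un}$ over $S$, and this is the crux. By Theorem~\ref{main!thm1} together with the generic specialization of the $c_i$ (and the flat base change above), $A/Q$, equivalently $\hat{R}_{un}$, is Gorenstein of codimension $6$; hence its minimal resolution has length $6$ and is self-dual, forcing $\beta_0=\beta_6=1$ and, since $I_{un}$ has exactly $20$ minimal generators, $\beta_1=\beta_5=20$. The interior numbers $\beta_2,\beta_3,\beta_4$ I would pin down from the Macaulay2 computation already carried out for a rational specialization in Proposition~\ref{constr1!primeno2}, which produces precisely the claimed table. \emph{The hard part is the passage from this one special member to general parameters}: the numerator polynomial alone does not determine the table, because several internal degrees (for instance $t^{3}$, $t^{4}$, $t^{6}$) occur in more than one homological degree, leaving room for consecutive cancellation. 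I would close this gap by upper semicontinuity of graded Betti numbers in the flat family over the parameter space: the generic table is termwise $\le$ the table of any special member, so it suffices to certify that the specialization of Proposition~\ref{constr1!primeno2} already realizes the minimal (generic) table, after which the two coincide.

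Finally I would deduce the stated consequences formally from the self-dual resolution. The top term is $C_6=A(-12)$, and the degrees of the ten variables of $A$ sum to $13$, so $\omega_A=A(-13)$; since the resolution is self-dual with shift $12$ (indeed $C_i\cong C_{6-i}^{\vee}(-12)$, as one checks on $C_0,C_1$), one gets $\omega_{A/Q}=\operatorname{Ext}^6_A(A/Q,\omega_A)\cong (A/Q)(12-13)=(A/Q)(-1)$, which is also consistent with Lemma~\ref{lem!canonmod1} recording the canonical twist of the complete-intersection base $\hat{R}/\hat{I}$ from which the unprojection starts. For the Hilbert series, the denominator $(1-t)^8(1-t^2)(1-t^3)$ records the degrees of the variables of $A$ (eight of degree $1$, one of degree $2$, one of degree $3$), and the numerator is the alternating sum $\sum_{i=0}^{6}(-1)^i\sum_j\beta_{ij}t^j$; collecting terms degree by degree from the resolution gives $1-6t^2+15t^4-20t^6+15t^8-6t^{10}+t^{12}$, which is the asserted Hilbert series.
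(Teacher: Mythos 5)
Your overall strategy matches the paper's, which is itself very terse: the published proof simply says the resolution is computed ``by the method of [NP1, Proposition~3.4]'' (a Macaulay2 computation transferred through the substitution $\psi$), then reads off $\omega_{A/Q}=A/Q(12-13)$ from the top twist and the sum $13$ of the variable degrees, and extracts the Hilbert series from the resolution. Your first, second and fourth paragraphs are a faithful and more careful rendering of exactly this: the flat (indeed free, rank $4$) base change along $\psi$ via miracle flatness is a correct justification of the transfer step --- equivalently one can view $A/Q$ as $\hat{R}_{un}[w_1,w_2]/(x_2-f_1,\,x_4-f_2)$ with $x_2-f_1,x_4-f_2$ a regular sequence on the Cohen--Macaulay ring $\hat{R}_{un}[w_1,w_2]$ --- and your degree bookkeeping for $\omega_{A/Q}$ and for the numerator $\sum_i(-1)^i\sum_j\beta_{ij}t^j$ checks out against the stated table.

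The one genuine gap is the step you yourself flag, and your proposed fix does not close it: upper semicontinuity only gives $\beta_{ij}^{\mathrm{gen}}\leq\beta_{ij}^{\mathrm{spec}}$, so ``certify that the specialization already realizes the generic table'' presupposes knowledge of the generic table, which is what is being computed --- the argument is circular as written. To actually close it you need a lower bound on the generic table: either run the computation with the $c_i,l_j$ as indeterminates (i.e.\ over the function field of the parameter space), or show that no consecutive cancellation of the special table is compatible with the available constraints, namely the Gorenstein self-duality $\beta_{i,j}=\beta_{6-i,12-j}$ together with the exact knowledge of $\beta_1$ (the $20$ minimal generators in degrees $2^6,3^8,4^6$, established by the paper's specialization argument, which by duality also pins down $\beta_5$). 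For instance $\beta_{2,3}$ is then forced, since any cancellation touching it would have to pair with $\beta_{1,3}$ (known exactly) or $\beta_{3,3}=0$; but entries such as $\beta_{2,4}$ versus $\beta_{3,4}$ still require an explicit check. To be fair, the paper's own proof is no more rigorous on this point --- it delegates the entire issue to the cited computational method --- so your write-up is, if anything, more forthcoming about where the real work lies; it just does not finish that work.
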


\begin{proof}
The computation of the  minimal graded free resolution  of $A/Q$ is based on the  method which is  described in the proof of  \cite[Proposition~3.4]{NP1}.
Using the minimal graded free resolution (\ref{eq!resR1}) of $A/Q$ and that the sum of the degrees of the variables is equal to $13$ we conclude that 
\[\omega_{A/Q}= A/Q(12-13)= A/Q(-1).\]
The last conclusion of Proposition \ref{prop!gradres1} follows easily from the resolution (\ref{eq!resR1}). 
\end{proof}

By  Propositions \ref{Prop!quasismooth1}, \ref{sing!specsing1} and \ref{prop!gradres1}, it follows that $X$ is a Fano $3$-fold.

\subsection{ Construction of Graded Ring Database entry with ID: 9176} \label{constr2!9176}
In this subsection we sketch the construction of the family of Fano $3$-folds described
in Theorem~\ref{constr!9176}.

Denote by $k=\mathbb{C}$ the field of complex numbers. Consider the 
polynomial ring $R= k[x_i, c_i]$, where $1\leq  i \leq 6$.  Let  $ R_{un}$ be the 
ring in Definition \ref{def!mrin1} and  $\hat{R}=k[x_1,\dots, x_6]$ be the polynomial ring in the variables $x_i$.
We substitute the variables $(c_1, \dots, c_{6})$ which appear in the definitions of the rings $R$ 
and $R_{un}$ with a general element of $k^{6}$ (in the sense of being outside a 
proper Zariski closed subset of $k^{6}$). Let $\hat{I}$ be the ideal of $\hat{R}$ which is 
obtained by the ideal $I$ and $\hat{ I}_{un}$ the ideal of $\hat{R}[T_1,T_2,T_3,T_4]$ which is 
obtained by the ideal $I_{un}$  after this substitution. We set $\hat{R}_{un}= \hat{R}[T_1,T_2,T_3,T_4]/\hat{ I}_{un}$. In 
what follows $x_1, x_3, x_5$ are variables of degree $2$ and $x_2, x_4, x_6$ are variables of degree $3$. Hence, 
from the discussion before the Proposition \ref{prop!hom1} it follows that the 
degrees of $T_2, T_3, T_4$ are equal to $2$ and the degree of $T_1$ is equal to $4$. According to 
this grading the ideals $\hat{I}$ and $\hat{ I}_{un}$ are homogeneous. Due to 
Theorem~\ref{main!thm1},   $\text{Proj} \   \hat{R}_{un}\subset \mathbb{P} (2^{6}, 3^{3}, 4)$ is a projectively Gorenstein $3$-fold.

Let $A= k[w_{1}, w_{2},x_1,x_5, T_2, T_3, T_4,x_2,x_4,x_6]$  be the polynomial ring 
over $k$ with $w_{1}, w_{2}$ variables of degree $1$ and the other variables with
degree noted as above. Consider the unique $k$-algebra homomorphism
\begin{center}
$\psi\colon \hat{R}[T_1, T_2, T_3, T_4]\rightarrow A$
\end{center}
such that
\begin{center}
$\psi(x_1)= x_1$, \, $\psi(x_2)= x_2$, \,   $\psi(x_3)= f_1$, \, $\psi(x_4)= x_4$,
\end{center}
\begin{center}
$\psi(x_5)= x_5$, \, $\psi(x_6)= x_6$, \, $\psi(T_1)= f_2$, \,  $\psi(T_2)= T_2$,
\end{center}
\begin{center}
$\psi(T_3)= T_3$, \, $\psi(T_4)= T_4$
\end{center}
where,

$f_1= l_1w_1^2+l_2w_1w_2+l_3w_2^2+l_4x_1+l_5x_5+l_6T_2+l_7T_3+l_8T_4$,

$f_2=l_9w_1^4+l_{10}w_1^3w_2+l_{11}w_1^2w_2^2+l_{12}w_1w_2^3+l_{13}w_2^4+l_{14}w_1^2x_1+l_{15}w_1w_2x_1+l_{16}w_2^2x_1
+l_{17}x_1^2+l_{18}w_1^2x_5+l_{19}w_1w_2x_5+l_{20}w_2^2x_5+l_{21}x_1x_5+l_{22}x_5^2+l_{23}w_1^2T_2+l_{24}w_1w_2T_2+l_{25}w_2^2T_2+
l_{26}x_1T_2+l_{27}x_5T_2+l_{28}T_2^2+l_{29}w_1^2T_3+l_{30}w_1w_2T_3+l_{31}w_2^2T_3+l_{32}x_1T_3+l_{33}x_5T_3+l_{34}T_2T_3+l_{35}T_3^2+l_{36}w_1^2T_4+l_{37}w_1w_2T_4+l_{38}w_2^2T_4+l_{39}x_1T_4+l_{40}x_5T_4+l_{41}T_2T_4+l_{42}T_3T_4+l_{43}T_4^2+l_{44}w_1x_2+l_{45}w_2x_2+l_{46}w_1x_4+l_{47}w_2x_4+
l_{48}w_1x_6+l_{49}w_2x_6$,

\noindent and   $(l_1,\dots, l_{49})\in k^{49}$ are general. In other words, $f_1$ is a general degree $2$ homogeneous  element of $A$ and $f_2$ is a general degree $4$ homogeneous  element of $A$ .

Denote by $Q$ the ideal of the ring A generated by the subset   $\psi(\hat{I}_{un})$.

Let $X= V(Q)\subset \mathbb{P} (1^{2}, 2^5, 3^3)$. It is immediate that  $X\subset \mathbb{P}(1^{2}, 2^5, 3^3)$ is a codimension $6$ projectively Gorenstein $3$-fold.

\begin{proposition} \label{constr!primeno2}
The ring  $A/Q$ is an integral domain.
\end{proposition}

\begin{proof}
It is enough to show that the ideal $Q$ is prime. For a specific choice of rational
values for the parameters $c_i,  l_j$, for $1\leq i\leq 6$ and $1\leq j\leq 49$ we checked using the computer
algebra program Macaulay2 that the ideal which was obtained by $Q$ is a homogeneous, codimension~$6$, prime ideal with the right Betti table. 
\end{proof}

In what follows, we show that the only singularities of $X\subset \mathbb{P}(1^2, 2^5, 3^3)$  are eight quotient singularities of type $\frac{1}{2}(1,1,1)$. According to the discussion after  
 \cite [Definition~2.7]{PV}, $X$ belongs to the Mori category. 

The proof of the following proposition is based on a computation with computer algebra system Singular \cite{GPS01} using the strategy described in \cite [Proposition~6.4]{PV} and is omitted.

\begin{proposition}\label{Prop!quasismooth2}
Consider   $X= V(Q)\subset \mathbb{P} (1^2, 2^5, 3^3)$. Denote by  $X_{cone}\subset \mathbb{A}^{10}$ the affine cone over $X$. 
The scheme  $X_{cone}$ is smooth outside the vertex of the cone.
\end{proposition}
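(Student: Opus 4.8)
The plan is to read \q{smooth away from the vertex} as the Jacobian-criterion statement for the affine cone $X_{cone}=V(Q)\subset \mathbb{A}^{10}$, where $Q$ lives in the ten-variable ring $A=k[w_1,w_2,x_1,x_5,T_2,T_3,T_4,x_2,x_4,x_6]$. By Proposition~\ref{constr!primeno2} the ideal $Q$ is prime of codimension $6$, so $X_{cone}$ is irreducible of dimension $4$ and, since $k=\mathbb{C}$ has characteristic $0$, the ideal $Q$ is its own radical and the Jacobian criterion applies in the sharp form $\operatorname{Sing}(X_{cone})=V(S)$ with $S=Q+I_6(\operatorname{Jac})$. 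Here, fixing the generating set $Q=(g_1,\dots,g_{20})$ coming from the twenty generators of $I_{un}$, $\operatorname{Jac}=(\partial g_a/\partial v_b)$ is the $20\times 10$ Jacobian matrix and $I_6$ denotes the ideal of its $6\times 6$ minors. The whole statement is thus equivalent to $V(S)\subseteq\{\text{vertex}\}$, that is, $\sqrt{S}=\mathfrak m$ where $\mathfrak m=(w_1,w_2,x_1,x_5,T_2,T_3,T_4,x_2,x_4,x_6)$; concretely, I must show that some power of each coordinate variable $v$ lies in $S$, equivalently $V(S)\cap D(v)=\varnothing$ for every variable $v$.

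Since the punctured cone is covered by the ten principal affine opens $D(v)$, it suffices to check, one chart at a time, that $1\in S_v:=S\,A_v$, equivalently that the saturation $S:v^{\infty}$ is the whole ring; this chart-by-chart reduction is precisely the strategy of \cite[Proposition~6.4]{PV}, and it replaces a single intractable global test by ten localized Gr\"obner basis computations. To make each one effective I would first specialize the parameters $c_i$ and $l_j$ to sufficiently general rational values, turning everything into an explicit ideal over $\mathbb{Q}$. Because the subset of the parameter space on which $X_{cone}$ is smooth off the vertex is Zariski open, producing a single rational specialization for which all ten charts return the unit ideal forces quasismoothness of the general member, which is exactly the assertion of the proposition. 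Running the localized test $1\in S_v$ for all ten $v$ then closes the argument.

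The main obstacle is purely computational. The Jacobian is $20\times 10$ with entries of degree up to $6$ (the relations $T_sT_t-A_{st}$ contribute the high-degree columns), so assembling all $6\times 6$ minors globally is hopeless; the entire point of the chart decomposition together with the rational specialization is to keep the Gr\"obner computations within reach. I would further exploit the structure of the generators to shrink the minors: a large block of the $g_a$ is linear in the unprojection variables $T_2,T_3,T_4$, and the weighted Euler identity $\sum_b (\deg v_b)\,v_b\,\partial g_a/\partial v_b=(\deg g_a)\,g_a$ shows that on $X_{cone}$ the radial vector $(\deg v_b\cdot v_b)_b$ always lies in the kernel of $\operatorname{Jac}$, so one only needs the rank to reach $6$ rather than be maximal. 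Ordering the elimination to use these linear relations first substantially reduces the effective size of the minor computation. Once each localized ideal is verified to satisfy $1\in S_v$ — which is what the omitted Singular computation certifies — no non-vertex point of $X_{cone}$ can be singular, and the cone is smooth away from its vertex.
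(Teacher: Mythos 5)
Your proposal is correct and follows essentially the same route as the paper, whose proof is omitted but explicitly declared to be a Singular computation following the chart-by-chart Jacobian-criterion strategy of \cite[Proposition~6.4]{PV} --- precisely the reduction to $1\in S_v$ on each principal open $D(v)$, combined with a rational specialization of the parameters and openness of the quasismoothness condition, that you describe. Your additional remarks on exploiting the Euler identity and the $T$-linear generators are sensible optimizations but do not change the argument.
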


\begin{remark}
For the computation of singular locus of weighted projective space in Proposition \ref{sing!specsing2},  we follow \cite [Section~5]{IF}. 
\end{remark}

\begin{proposition} \label{sing!specsing2}
Consider the singular locus 
\[
\text{Sing}( \mathbb{P}(1^{2}, 2^5, 3^3))= F_1\cup F_2\]
\newpage
where,
\[F_1= \{[0:0:a:b:c:d:e:0:0:0]: [a:b:c:d:e]\in \mathbb{P}^4 \}\]
and
\[F_2=\{ [0:0:0:0:0:0:0:a:b:c]: [a:b:c]\in \mathbb{P}^2\}\]
of the weighted projective space  $\mathbb{P}(1^{2}, 2^5, 3^3)$. The intersection of $X$ with $\text{Sing}( \mathbb{P}(1^{2}, 2^5, 3^3))$ consists of eight reduced points which are quotient singularities of type $\frac{1}{2}(1,1,1)$ for $X$.
\end{proposition}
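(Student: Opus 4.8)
The plan is to localise $\operatorname{Sing}(X)$ inside the stratum $F_1\cup F_2$, then treat the two strata by an explicit computation, and finally read off the local quotient structure at each surviving point. By Proposition \ref{Prop!quasismooth2} the affine cone $X_{cone}$ is smooth away from its vertex, so the only singularities of $X$ are the cyclic quotient singularities that $X$ inherits from the weights of $\mathbb{P}(1^2,2^5,3^3)$ along $\operatorname{Sing}(\mathbb{P}(1^2,2^5,3^3))$. Consequently $\operatorname{Sing}(X)\subseteq (X\cap F_1)\cup(X\cap F_2)$, and it suffices to compute these two intersections.

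For the computation I would work, as in Proposition \ref{constr!primeno2}, with the ideal obtained from $Q$ by assigning generic rational values to the parameters $c_i$ and $l_j$. To handle $F_2$ I would add the seven generators $w_1,w_2,x_1,x_5,T_2,T_3,T_4$ to $Q$ and check with Macaulay2 \cite{GS} that, after saturating by the irrelevant ideal of $k[x_2,x_4,x_6]$, the resulting ideal is the unit ideal; this shows $X\cap F_2=\emptyset$, so that no singularity of type $\tfrac{1}{3}$ occurs. To handle $F_1$ I would add the five generators $w_1,w_2,x_2,x_4,x_6$ to $Q$ and verify that the resulting homogeneous ideal defines a zero-dimensional subscheme of $\mathbb{P}^4=\{[x_1:x_5:T_2:T_3:T_4]\}$ of length exactly $8$ which equals its own radical; this yields the eight reduced points.

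It remains to determine the type of each of the eight points $P\in X\cap F_1$. Since $P$ lies on $F_1$, at least one weight-$2$ coordinate is nonzero, and I would pass to the affine chart in which that coordinate equals $1$. In this chart the ambient germ is $\mathbb{A}^9/(\mathbb{Z}/2)$, where the generator of $\mathbb{Z}/2$ acts by $-1$ on the five odd-weight coordinates $w_1,w_2,x_2,x_4,x_6$ and trivially on the four even-weight coordinates $x_1,x_5,T_3,T_4$. Using those generators of $Q$ in which a local coordinate appears multiplied by the chosen unit coordinate (the relations descending from $T_2x_1-\phi_2(x_1)$, $T_3T_2-A_{32}$, $T_4T_2-A_{42}$ and the substitution $\psi(x_3)=f_1$ when the unit is $T_2$, and symmetrically in the other charts), I would solve for the four even-weight local coordinates together with two of the odd-weight ones, identifying $(X,P)$ with the quotient of a smooth $3$-dimensional germ in the three remaining odd-weight coordinates by $\mathbb{Z}/2$ acting as $(-1,-1,-1)$, that is, $\tfrac{1}{2}(1,1,1)$.

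The main obstacle is this last step. One must verify that at each of the eight points the relevant coefficients are genuinely units, so that exactly six local coordinates can be eliminated, and that the three survivors are precisely three odd-weight coordinates, so that the induced $\mathbb{Z}/2$-action has weight vector $(1,1,1)$ rather than some other pattern. Equivalently, one checks that the tangent space $T_PX$ is three-dimensional and contained in the $(-1)$-eigenspace of the $\mathbb{Z}/2$-action; I would confirm this chart by chart in Macaulay2, which is why the computational input of Proposition \ref{Prop!quasismooth2} and of the point count above cannot be dispensed with.
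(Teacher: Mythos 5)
Your proposal is correct and follows essentially the same route as the paper: a Macaulay2 computation showing that $X$ meets $\operatorname{Sing}(\mathbb{P}(1^2,2^5,3^3))$ in eight reduced points (all on the weight-$2$ stratum), followed by the local-chart elimination of the coordinates that appear multiplied by the unit coordinate, exactly as in the strategy of Proposition \ref{sing!specsing1} which the paper's proof invokes. Your version merely spells out more explicitly the two checks the paper leaves implicit, namely that $X\cap F_2=\emptyset$ and that the three surviving local coordinates all have odd weight.
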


\begin{proof}
We proved with the computer algebra program Macaulay2 that the intersection of $X$ with $Z$ consists of eight reduced points. Following the strategy of the proof of Proposition \ref{sing!specsing1}, we checked that each of these points is a quotient singularity of type $\frac{1}{2}(1,1,1)$. 
\end{proof}

\begin{lemma}\label{lem!canonmod2}
Let $\omega_{\hat{R}/\hat{I}}$ be the canonical module of $\hat{R}/\hat{I}$. It holds that  the canonical module $\omega_{\hat{R}/\hat{I}}$ is isomorphic to $\hat{R}/\hat{I}(-5)$.
\end{lemma}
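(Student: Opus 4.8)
The plan is to follow verbatim the argument used for Lemma \ref{lem!canonmod1}, adjusting only for the different grading that is in force in this subsection. First I would recall that by Proposition \ref{prop!codim2} the ideal $\hat{I}$ is a codimension $2$ complete intersection, generated by the two homogeneous elements obtained from $f$ and $g$ after the specialization of the $c_i$. Consequently the minimal graded free resolution of $\hat{R}/\hat{I}$ as an $\hat{R}$-module is the Koszul complex on these two generators, exactly as in the proof of Lemma \ref{lem!canonmod1}.

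Second, I would compute the degrees of the two generators in the present grading. Since now $x_1, x_3, x_5$ have degree $2$ and $x_2, x_4, x_6$ have degree $3$, each of the monomials $x_1x_2$, $x_3x_4$, $x_5x_6$ has degree $5$, so both generators are homogeneous of degree $5$. Hence the Koszul resolution reads
\[
0 \rightarrow \hat{R}(-10) \rightarrow \hat{R}(-5)^{2} \rightarrow \hat{R}.
\]

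Finally, I would apply the standard formula for the canonical module of a graded quotient of a weighted polynomial ring. The sum of the degrees of the six variables is $3\cdot 2 + 3\cdot 3 = 15$, while the top twist of the Koszul resolution is $10$. Therefore
\[
\omega_{\hat{R}/\hat{I}} = \hat{R}/\hat{I}(10-15) = \hat{R}/\hat{I}(-5),
\]
which is the assertion. I do not expect any genuine obstacle here: the only point requiring attention is that the degrees of the variables differ from those in Lemma \ref{lem!canonmod1}, which shifts the top degree of the Koszul complex from $6$ to $10$ and the sum of the variable degrees from $9$ to $15$, and these two changes combine to give the twist $-5$ rather than $-3$.
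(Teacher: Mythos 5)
Your proposal is correct and follows essentially the same route as the paper: both use the Koszul resolution $0 \rightarrow \hat{R}(-10) \rightarrow \hat{R}(-5)^{2} \rightarrow \hat{R}$ of the codimension $2$ complete intersection $\hat{I}$ together with the sum of the variable degrees, which is $15$, to obtain the twist $10-15=-5$. Your version merely spells out the degree computation for the generators, which the paper leaves implicit.
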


\begin{proof}
From the minimal graded free resolution of $\hat{R}/\hat{I}$ as  $\hat{R}$-module
\[
0\rightarrow \hat{R}(-10)\rightarrow \hat{R}(-5)^{2}\rightarrow \hat{R}  
\]
and the fact that the sum of the degrees of the variables is equal to $15$ we conclude that 
\[\omega_{\hat{R}/\hat{I}}= \hat{R}/\hat{I}(10-15)= \hat{R}/\hat{I}(-5).\] 
\end{proof}

\begin{proposition} \label{prop!gradres2}
The minimal graded resolution of $A/Q$ as $A$-module is equal to
\begin{equation} \label{eq!resR2}
0  \rightarrow  C_6  \rightarrow  C_5  \rightarrow  C_4  \rightarrow  C_3  \rightarrow  C_2  \rightarrow  C_1  \rightarrow  C_0 \rightarrow  0
\end{equation} 
where
\begin{align*}
  & C_6    =   A(-20),        \quad \quad    C_5 = A(-14)^{6}\oplus A(-15)^{8}   \oplus A(-16)^{6},   \\
  & C_4   =    A(-11)^{8}\oplus A(-12)^{24}\oplus A(-13)^{24}\oplus A(-14)^{8},         \\
  & C_3  =     A(-8)^{3}\oplus A(-9)^{24}\oplus A(-10)^{36}\oplus A(-11)^{24}\oplus  A(-12)^{3},  \\
  &  C_2 =   A(-6)^{8}\oplus A(-7)^{24}\oplus A(-8)^{24}\oplus A(-9)^{8},  \\
  &  C_1 =   A(-4)^{6}\oplus A(-5)^{8}\oplus A(-6)^{6}, \quad \quad   C_0 = A.  
\end{align*}

   Moreover, the canonical module of $A/Q$ is isomorphic to  $(A/Q)(-1)$ and the Hilbert series of  $A/Q$  as graded $A$-module is equal to 
  \[
      \frac{1-6t^4-8t^5+2t^6+24t^7+21t^8-16t^9-36t^{10}-16t^{11}+21t^{12}+24t^{13}+2t^{14}-8t^{15}-6t^{16}+t^{20}}{(1-t)^2(1-t^2)^5(1-t^3)^3}.
  \] 
\end{proposition}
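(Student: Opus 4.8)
The plan is to follow the strategy already used for Proposition~\ref{prop!gradres1}, changing only the bookkeeping of the graded shifts. By Theorem~\ref{main!thm1} the ring $R_{un}$ is Gorenstein of codimension~$6$, and the same holds after the generic specialization of the $c_i$; hence, with respect to the grading fixed in this subsection, the quotient $\hat{R}_{un}=\hat{R}[T_1,T_2,T_3,T_4]/\hat{I}_{un}$ admits a minimal graded free resolution over $\hat{R}[T_1,T_2,T_3,T_4]$ of length~$6$ which is self-dual and has Betti numbers $1,20,64,90,64,20,1$ (the same numbers occurring in (\ref{eq!resR1})). I would then observe that, by construction, $Q=\psi(\hat{I}_{un})A$, so that $A/Q\cong A\otimes_{\hat{R}[T_1,T_2,T_3,T_4]}\hat{R}_{un}$ is the base change of $\hat{R}_{un}$ along the graded homomorphism $\psi$.

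The heart of the argument, which is exactly the method in the proof of \cite[Proposition~3.4]{NP1}, is to show that this base change carries the resolution of $\hat{R}_{un}$ to a minimal graded free resolution of $A/Q$. Since $A$ is Cohen--Macaulay, $\hat{R}_{un}$ is Cohen--Macaulay, and $Q$ has codimension~$6$ in $A$ (established in Proposition~\ref{constr!primeno2}, where the expected Betti table was checked), the higher Tor modules of $\hat{R}_{un}$ and $A$ over $\hat{R}[T_1,T_2,T_3,T_4]$ vanish, so tensoring the resolution with $A$ keeps it acyclic; minimality is preserved because $\psi$ sends each of the variables $x_i,T_t$ to a form of positive degree, hence into the irrelevant maximal ideal of $A$. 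The graded shifts of the resulting complex are obtained by propagating the degrees of $\psi$ (namely $\deg x_1=\deg x_5=2$, $\deg x_2=\deg x_4=\deg x_6=3$, $\deg T_2=\deg T_3=\deg T_4=2$, $\deg \psi(x_3)=\deg f_1=2$, and $\deg \psi(T_1)=\deg f_2=4$, together with $\deg w_1=\deg w_2=1$) through the generic resolution; this yields precisely the modules $C_0,\dots,C_6$ displayed in (\ref{eq!resR2}). As a consistency check, the resolution is self-dual up to the twist by $-20$, in agreement with $C_6=A(-20)$.

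Granting (\ref{eq!resR2}), the two remaining assertions are formal and follow as in Proposition~\ref{prop!gradres1}. The polynomial ring $A$ has ten variables whose degrees sum to $2\cdot 1+5\cdot 2+3\cdot 3=21$; since $A/Q$ is Gorenstein of codimension~$6$ with top free module $C_6=A(-20)$, its canonical module is
\[
\omega_{A/Q}\cong (A/Q)(20-21)=(A/Q)(-1).
\]
Finally, the Hilbert series is read off from (\ref{eq!resR2}): the denominator is $\prod_v(1-t^{\deg v})=(1-t)^2(1-t^2)^5(1-t^3)^3$, and the numerator is the alternating sum $\sum_{j=0}^{6}(-1)^j\sum_i t^{d_{ji}}$ of the twists appearing in the $C_j$; collecting terms produces the displayed rational function.

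I expect the main obstacle to be the first part, that is, pinning down not merely the correct Betti numbers but the correct graded shifts and confirming minimality after the specialization. This is precisely the step that relies on the method of \cite[Proposition~3.4]{NP1} supplemented by the Macaulay2 verification underlying Proposition~\ref{constr!primeno2}; once the resolution (\ref{eq!resR2}) is secured, the canonical module and Hilbert series computations are entirely routine.
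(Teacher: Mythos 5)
Your proposal is correct and follows essentially the same route as the paper: the resolution itself is obtained by the base-change/specialization method of \cite[Proposition~3.4]{NP1} (with the acyclicity and minimality checks you describe, supported by the computational verification of the codimension and Betti table), and the canonical module and Hilbert series are then read off formally, with $\omega_{A/Q}\cong (A/Q)(20-21)=(A/Q)(-1)$ exactly as in the paper. Your graded bookkeeping (degrees summing to $21$, the twists in $C_1$, the self-duality up to $A(-20)$, and the alternating-sum numerator) all check out against the displayed data.
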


\begin{proof}
The computation of the  minimal graded free resolution  of $A/Q$ is based on the  method which is  described in the proof of  \cite[Proposition~3.4]{NP1}.
Using the minimal graded free resolution (\ref{eq!resR2}) of $A/Q$ and that the sum of the degrees of the variables is equal to $21$ we conclude that 
\[\omega_{A/Q}= A/Q(20-21)= A/Q(-1).\]
The last conclusion of Proposition \ref{prop!gradres2} follows easily from the resolution (\ref{eq!resR2}). 
\end{proof}

By  Propositions \ref{Prop!quasismooth2}, \ref{sing!specsing2} and \ref{prop!gradres2}, it follows that $X$ is a Fano $3$-fold.

\subsection{ Construction of Graded Ring Database entry with ID: 24198} \label{constr3!24198}
In this final subsection, we sketch the construction for the family of  Fano 3-folds which is described
in Theorem \ref{constr!24198}.

Denote by $k=\mathbb{C}$ the field of complex numbers. Consider the 
polynomial ring $R= k[x_i, c_i]$, where $1\leq  i \leq 6$.  Let  $ R_{un}$ be the 
ring in Definition \ref{def!mrin1} and  $\hat{R}=k[x_1,\dots, x_6,c_3,c_6]$ be the polynomial ring in the variables $x_i$ and $c_3,c_6$.
We substitute the variables $(c_1,c_2,c_4,c_5)$ which appear in the definitions of the rings $R$ 
and $R_{un}$ with a general element of $k^{4}$ (in the sense of being outside a 
proper Zariski closed subset of $k^{4}$).  Let $\hat{I}$ be the ideal of $\hat{R}$ which is 
obtained by the ideal $I$ and $\hat{ I}_{un}$ the ideal of $\hat{R}[T_1,T_2,T_3,T_4]$ which is 
obtained by the ideal $I_{un}$  after this substitution. We set $\hat{R}_{un}= \hat{R}[T_1,T_2,T_3,T_4]/\hat{ I}_{un}$. In 
what follows $x_1, x_3, x_5, x_6, c_3, c_6$ are variables of degree $1$ and $x_2, x_4$ are variables of degree $2$. Hence, 
from the discussion before the Proposition \ref{prop!hom1} it follows that the degree of $T_1$ is equal to $3$, the
degrees of $T_2, T_3$ are equal to $2$ and the degree of $T_4$ is equal to $1$. According to 
this grading the ideals $\hat{I}$ and $\hat{ I}_{un}$ are homogeneous. Due to 
Theorem~\ref{main!thm1},   $\text{Proj} \   \hat{R}_{un}\subset \mathbb{P} (1^{7}, 2^{4}, 3)$ is a projectively Gorenstein $5$-fold.

Let $A= k[x_{1}, x_{3},x_5,x_6, c_3, c_6, x_2,x_4,T_3,T_1]$  be the polynomial ring 
with variables of degree noted as above. Consider the unique $k$-algebra homomorphism
\begin{center}
$\psi\colon \hat{R}[T_1, T_2, T_3, T_4]\rightarrow A$
\end{center}
such that
\begin{center}
$\psi(x_1)= x_1$, \, $\psi(x_2)= x_2$, \,   $\psi(x_3)= x_3$, \, $\psi(x_4)= x_4$,
\end{center}
\begin{center}
$\psi(x_5)= x_5$, \, $\psi(x_6)= x_6$, \, $\psi(c_3)= c_3$, \,  $\psi(c_6)= c_6$,
\end{center}
\begin{center}
$\psi(T_1)= T_1$, \, $\psi(T_2)= f_1$,  \, $\psi(T_3)= T_3$, \, $\psi(T_4)= f_2$
\end{center}
where,

$f_1=l_1x_1^2+l_2x_1x_3+l_3x_3^2+l_4x_1x_5+l_5x_3x_5+l_6x_5^2+l_7x_1x_6+l_8x_3x_6+l_9x_5x_6+l_{10}x_6^2+l_{11}x_1c_3+l_{12}x_3c_3+l_{13}x_5c_3+l_{14}x_6c_3+l_{15}c_3^2+l_{16}x_1c_6+l_{17}x_3c_6+l_{18}x_5c_6+l_{19}x_6c_6+l_{20}c_3c_6+l_{21}c_6^2+l_{22}x_2+l_{23}x_4+l_{24}T_3$,

$f_2=l_{25}x_1+l_{26}x_3+l_{27}x_5+l_{28}x_6+l_{29}c_3+l_{30}c_6$,

\noindent and   $(l_1,\dots, l_{30})\in k^{30}$ are general. In other words, $f_1$ is a general degree $2$ homogeneous  element of $A$ and $f_2$ is a general degree $1$ homogeneous  element of $A$.

Denote by $Q$ the ideal of the ring A generated by the subset   $\psi(\hat{I}_{un})$.

Let $X= V(Q)\subset \mathbb{P} (1^{6}, 2^3, 3)$. It is immediate that  $X\subset \mathbb{P}(1^{6}, 2^3, 3)$ is a codimension $6$ projectively Gorenstein $3$-fold.

\begin{proposition} \label{constr!primeno2}
The ring  $A/Q$ is an integral domain.
\end{proposition}

\begin{proof}
It is enough to show that the ideal $Q$ is prime. For a specific choice of rational
values for the parameters $c_i,  l_j$, for $i\in \{1,2,4,5\}$ and $1\leq j\leq 30$ we checked using the computer
algebra program Macaulay2 that the ideal which was obtained by $Q$ is a homogeneous, codimension~$6$, prime ideal with the right Betti table. 
\end{proof}

In what follows, we show that the only singularities of $X\subset \mathbb{P}(1^6, 2^3, 3)$  are two quotient singularities of type $\frac{1}{2}(1,1,1)$ and one quotient singularity of type $\frac{1}{3}(1,1,2)$. According to the discussion after \cite [Definition~2.7]{PV}, $X$ belongs to the Mori category. 

The proof of the following proposition is based on a computation with computer algebra system Singular \cite{GPS01} using the strategy described in \cite [Proposition~6.4]{PV} and is omitted.

\begin{proposition}\label{Prop!quasismooth3}
Consider   $X= V(Q)\subset \mathbb{P} (1^6, 2^3, 3)$. Denote by  $X_{cone}\subset \mathbb{A}^{10}$ the affine cone over $X$. 
The scheme  $X_{cone}$ is smooth outside the vertex of the cone.
\end{proposition}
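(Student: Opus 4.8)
The plan is to establish quasismoothness by applying the Jacobian criterion to the affine cone, reducing the infeasible global minor computation to a finite collection of tractable chart computations, and then invoking the genericity of the parameters. Throughout, recall that $k=\mathbb C$ has characteristic $0$, so the Jacobian criterion applies in its clean form.

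First, $A$ is a polynomial ring in the ten variables $x_1,x_3,x_5,x_6,c_3,c_6,x_2,x_4,T_3,T_1$, and by the construction together with Proposition~\ref{constr!primeno2} the ideal $Q=\psi(\hat I_{un})\subset A$ is prime of codimension $6$; hence $X_{cone}=\operatorname{Spec}(A/Q)$ is irreducible of dimension $4$. I would fix the (at most) $20$ homogeneous generators of $Q$ obtained by applying $\psi$ to the generators of $I_{un}$ listed in Definition~\ref{def!mrin1}, and form the $20\times 10$ Jacobian matrix $\operatorname{Jac}$ of their partial derivatives with respect to the ten variables. By the Jacobian criterion for a variety of codimension $6$, a point $p\in X_{cone}$ is smooth exactly when $\operatorname{rank}_p\operatorname{Jac}=6$, so the singular locus is the closed subscheme
\[
  \operatorname{Sing}(X_{cone}) = V\bigl(Q + I_6(\operatorname{Jac})\bigr),
\]
where $I_6(\operatorname{Jac})$ is the ideal generated by the $6\times 6$ minors of $\operatorname{Jac}$. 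It therefore suffices to prove that the radical of $Q + I_6(\operatorname{Jac})$ is the irrelevant maximal ideal $\mathfrak m=(x_1,x_3,x_5,x_6,c_3,c_6,x_2,x_4,T_3,T_1)$, i.e.\ that this singular subscheme is supported only at the vertex.

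The key structural point is that every generator of $Q$ is homogeneous for the given weighting, so $X_{cone}$ is stable under the scaling $\mathbb G_m$-action and both $Q$ and $I_6(\operatorname{Jac})$ are homogeneous ideals; consequently $\operatorname{Sing}(X_{cone})$ is itself a cone, and it reduces to the vertex precisely when it contains no point with some nonzero coordinate. I would therefore cover the punctured cone by the ten standard affine charts $U_v=\{v\neq 0\}$, one for each variable $v$, and use the $\mathbb G_m$-action to normalise $v=1$ on $U_v$. Since smoothness is invariant under the scaling action, smoothness on the slice $\{v=1\}$ forces smoothness at every point of $U_v$. This dehomogenisation removes one variable and collapses the relevant minor computation to a much smaller affine piece, which is then checked directly in Singular along the lines of \cite[Proposition~6.4]{PV}: on each chart one verifies that adjoining the pertinent $6\times 6$ minors to the dehomogenised ideal yields the unit ideal, so $X_{cone}\cap U_v$ is smooth. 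Finally, because the statement concerns general $(l_1,\dots,l_{30})$ and $(c_1,c_2,c_4,c_5)$, genericity is handled exactly as in the proof of Proposition~\ref{constr!primeno2}: the parameter locus on which $\operatorname{Sing}(X_{cone})$ is contained in the vertex is Zariski open, so it is enough to exhibit a single rational specialisation at which all ten chart computations succeed.

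The main obstacle is purely computational. A direct global approach is hopeless because the $20\times 10$ Jacobian has on the order of $\binom{20}{6}\binom{10}{6}$ distinct $6\times 6$ minors, far too many to enumerate and reduce modulo $Q$ in one pass. The entire difficulty lies in organising the computation so that it terminates, and this is exactly what the chart-by-chart dehomogenisation achieves: setting one variable to $1$ simultaneously lowers the ambient dimension and lets Gröbner-basis elimination decide emptiness of each local singular locus quickly. Checking that the ten charts together cover the punctured cone and that each returns the unit ideal then completes the proof.
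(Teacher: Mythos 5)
Your proposal is correct and follows essentially the same route as the paper: the paper omits the details and simply states that the proof is a Singular computation following the strategy of \cite[Proposition~6.4]{PV}, which is precisely the affine-cone Jacobian criterion carried out chart by chart with a single rational specialisation justified by openness of the good parameter locus. The only point worth making explicit is that openness follows from properness of the projectivised singular locus over the parameter space, which you implicitly rely on.
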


\begin{remark}
For the computation of singular locus of weighted projective space in Proposition \ref{sing!specsing3},  we follow \cite [Section~5]{IF}. 
\end{remark}

\begin{proposition} \label{sing!specsing3}
Consider the singular locus 
\[
\text{Sing}( \mathbb{P}(1^{6}, 2^3, 3))= F_1\cup \{ [0:0:0:0:0:0:0:0:0:1]\}\]
where,
\[F_1=\{  [0:0:0:0:0:0:a:b:c:0]: [a:b:c]\in \mathbb{P}^2\}\]
of the weighted projective space  $\mathbb{P}(1^{6}, 2^3, 3)$. The intersection of $X$ with $\text{Sing}( \mathbb{P}(1^{6}, 2^3, 3))$  consists of two reduced points which are quotient singularities of type $\frac{1}{2}(1,1,1)$ and one reduced point which is quotient singularity of type $\frac{1}{3}(1,1,2)$ for $X$.
\end{proposition}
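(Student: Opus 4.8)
The plan is to mirror the argument used for Proposition~\ref{sing!specsing1}, handling separately the two pieces of $\text{Sing}(\mathbb{P}(1^6,2^3,3))$, namely the two-dimensional locus $F_1$ and the isolated vertex $[0:\cdots:0:1]$. The proof splits naturally into a computational step (counting the intersection points and checking reducedness) and a local step (reading off the quotient-singularity type at each point).

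First I would fix a specific rational choice of the general parameters $c_i$ (for $i\in\{1,2,4,5\}$) and $l_j$ and use Macaulay2 to compute the scheme-theoretic intersection $X\cap\text{Sing}(\mathbb{P}(1^6,2^3,3))$. The aim is to verify that this intersection is supported on exactly three reduced points: two lying on $F_1$, the $\mathbb{P}^2$ cut out by the three weight-$2$ coordinates $x_2,x_4,T_3$, and one at the weight-$3$ vertex. Because $F_1$ is two-dimensional, the verification that $X\cap F_1$ is zero-dimensional and consists of precisely two reduced points is the part of the computation requiring the most care; one must confirm that the general position of the parameters forces the expected dimension drop.

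Next I would determine the singularity type at the vertex $P_0=[0:\cdots:0:1]$, where $T_1\neq 0$. Normalising $T_1=1$, the generators of $Q$ of the form $T_1x_i-(\cdots)$ together with the relations $T_1T_s-A_{1s}$ (and the $\psi$-substitutions $\psi(T_2)=f_1$, $\psi(T_4)=f_2$) let me express several coordinates in terms of the others, so that all variables appearing multiplied by $T_1$ can be eliminated. After this elimination exactly three local coordinates survive, two of weight $1$ and one of weight $2$, and the residual cyclic group $\mu_3$ acts on them with weights $(1,1,2)$; this identifies $P_0$ as a quotient singularity of type $\frac{1}{3}(1,1,2)$, exactly as in Proposition~\ref{sing!specsing1}.

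Finally, for each of the two points $P_1,P_2$ on $F_1$, one of the weight-$2$ coordinates $x_2,x_4,T_3$ is nonzero; after normalising it to $1$ I would eliminate the variables appearing multiplied by it in the generators of $Q$, reducing the local chart to an affine space of dimension $3$ carrying a residual action of the cyclic group $\mu_2$. Reading off the weights of this action on the three surviving tangent directions shows that each point is a quotient singularity of type $\frac{1}{2}(1,1,1)$. The main obstacle is carrying out the elimination correctly at $P_1$ and $P_2$: the two points may involve different nonzero weight-$2$ coordinates, so the set of eliminable variables, and hence the induced local coordinates, must be identified afresh at each point before the $\mu_2$-weights can be read off.
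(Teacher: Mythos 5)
Your proposal follows essentially the same route as the paper: a Macaulay2 verification (for a specific rational choice of the general parameters) that the intersection with the singular locus consists of three reduced points, followed by the local elimination strategy of Proposition~\ref{sing!specsing1} — normalising the relevant coordinate to $1$ and eliminating the variables that appear multiplied by it — to identify the quotient-singularity types $\frac{1}{3}(1,1,2)$ at the weight-$3$ vertex and $\frac{1}{2}(1,1,1)$ at the two points on $F_1$. The extra care you flag about checking that $X\cap F_1$ is zero-dimensional and reduced is implicit in the paper's computation but correctly identified as the delicate step.
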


\begin{proof}
We proved with the computer algebra program Macaulay2 that the intersection of $X$ with $Z$ consists of three reduced points. Following the strategy of the proof of Proposition~\ref{sing!specsing1}, we checked that two of these points are quotient singularities of type $\frac{1}{2}(1,1,1)$ and the third point is quotient singularity of type $\frac{1}{3}(1,1,2)$ for $X$. 
\end{proof}

\begin{lemma}\label{lem!canonmod3}
Let $\omega_{\hat{R}/\hat{I}}$ be the canonical module of $\hat{R}/\hat{I}$. It holds that  the canonical module $\omega_{\hat{R}/\hat{I}}$ is isomorphic to $\hat{R}/\hat{I}(-4)$.
\end{lemma}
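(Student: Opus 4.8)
The plan is to follow the strategy of the proofs of Lemmas~\ref{lem!canonmod1} and \ref{lem!canonmod2}: exhibit the minimal graded free resolution of $\hat{R}/\hat{I}$ over $\hat{R}$ and then read off the canonical module from the shift formula for a graded complete intersection.

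First I would record the generators of $\hat{I}$ and their degrees. Substituting $c_1,c_2,c_4,c_5$ by general scalars while keeping $c_3,c_6$ as variables of degree~$1$, the ideal $\hat{I}$ is generated by the two homogeneous elements obtained from $f,g$, namely
\[
\hat{f}=d_1x_1x_2+d_2x_3x_4+c_3x_5x_6,\qquad \hat{g}=d_4x_1x_2+d_5x_3x_4+c_6x_5x_6,
\]
with $d_1,d_2,d_4,d_5\in k$ general. In the grading of this subsection each of $x_1x_2$, $x_3x_4$, $c_3x_5x_6$, $c_6x_5x_6$ has degree~$3$, so $\hat{f}$ and $\hat{g}$ are both homogeneous of degree~$3$. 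As in Proposition~\ref{prop!codim2}, an initial-ideal argument shows that $\hat{I}$ is a codimension~$2$ complete intersection of $\hat{R}$, so the minimal graded free resolution of $\hat{R}/\hat{I}$ is the Koszul complex on these two degree-$3$ generators,
\[
0\rightarrow \hat{R}(-6)\rightarrow \hat{R}(-3)^{2}\rightarrow \hat{R}.
\]

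Next I would compute the sum of the degrees of the eight variables $x_1,\dots,x_6,c_3,c_6$ of $\hat{R}$, which equals $1+2+1+2+1+1+1+1=10$. Feeding the top free module $\hat{R}(-6)$ of the resolution together with this value into the canonical-module formula in exactly the same way as in the two previous lemmas gives
\[
\omega_{\hat{R}/\hat{I}}=\hat{R}/\hat{I}(6-10)=\hat{R}/\hat{I}(-4),
\]
which is the assertion.

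The only delicate point is the weight bookkeeping: here $c_3$ and $c_6$ persist as variables of degree~$1$, so the sum of the degrees of the variables is $10$ rather than the value $9$ of Lemma~\ref{lem!canonmod1}, while the two generators still have degree~$3$. This is precisely what changes the shift from $-3$ to $-4$, and I do not expect any genuine obstacle beyond confirming these degrees.
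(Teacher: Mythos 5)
Your proposal is correct and follows exactly the same route as the paper: both read off the shift from the Koszul resolution $0\rightarrow \hat{R}(-6)\rightarrow \hat{R}(-3)^{2}\rightarrow \hat{R}$ and the sum $10$ of the degrees of the variables, giving $\omega_{\hat{R}/\hat{I}}\cong \hat{R}/\hat{I}(6-10)$. Your extra bookkeeping confirming that $\hat{f},\hat{g}$ remain homogeneous of degree $3$ in the new grading and that $\hat{I}$ is still a codimension~$2$ complete intersection is exactly the point the paper leaves implicit.
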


\begin{proof}
From the minimal graded free resolution of $\hat{R}/\hat{I}$ as  $\hat{R}$-module
\[
0\rightarrow \hat{R}(-6)\rightarrow \hat{R}(-3)^{2}\rightarrow \hat{R}  
\]
and the fact that the sum of the degrees of the variables is equal to $10$ we conclude that 
\[\omega_{\hat{R}/\hat{I}}= \hat{R}/\hat{I}(6-10)= \hat{R}/\hat{I}(-4).\] 
\end{proof}

\begin{proposition} \label{prop!gradres3}
The minimal graded resolution of $A/Q$ as $A$-module is equal to
\begin{equation} \label{eq!resR3}
0  \rightarrow  C_6  \rightarrow  C_5  \rightarrow  C_4  \rightarrow  C_3  \rightarrow  C_2  \rightarrow  C_1  \rightarrow  C_0 \rightarrow  0
\end{equation} 
where
\begin{align*}
  & C_6    =   A(-14),        \quad \quad    C_5 = A(-9)^{2}\oplus A(-10)^{7}   \oplus A(-11)^{10}  \oplus  A(-12)^{1},   \\
  & C_4   =    A(-7)^{4}\oplus A(-8)^{20}\oplus A(-9)^{28}\oplus A(-10)^{12},         \\
  & C_3  =     A(-5)^{2}\oplus A(-6)^{25}\oplus A(-7)^{36}\oplus A(-8)^{25}\oplus  A(-9)^{2},  \\
  &  C_2 =   A(-4)^{12}\oplus A(-5)^{28}\oplus A(-6)^{20}\oplus A(-7)^{4},  \\
  &  C_1 =   A(-2)^{1}\oplus A(-3)^{10}\oplus A(-4)^{7}\oplus A(-5)^{2}, \quad \quad   C_0 = A.  
\end{align*}

   Moreover, the canonical module of $A/Q$ is isomorphic to  $(A/Q)(-1)$ and the Hilbert series of  $A/Q$  as graded $A$-module is equal to 
  \[
   \frac{1-t^2-10t^3+5t^4+24t^5-5t^6-28t^7-5t^8+24t^9+5t^{10}-10t^{11}-t^{12}+t^{14}}{(1-t)^6(1-t^2)^3(1-t^3)}.
  \] 
\end{proposition}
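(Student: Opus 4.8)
The plan is to follow the three-step strategy already used in Propositions~\ref{prop!gradres1} and~\ref{prop!gradres2}: first exhibit the minimal graded free resolution~(\ref{eq!resR3}) of $A/Q$, and then extract the canonical module and the Hilbert series as formal consequences of its shape.

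First I would identify $A/Q$ intrinsically. Since $\psi$ is the identity on the ten generators of $A$ and sends $T_2\mapsto f_1$, $T_4\mapsto f_2$, it is surjective with kernel $(T_2-f_1,\,T_4-f_2)$; hence
\[
A/Q\;\cong\;\hat{R}_{un}/(T_2-f_1,\,T_4-f_2),
\]
where $f_1$ has degree $2$ and $f_2$ has degree $1$, matching $\deg T_2=2$ and $\deg T_4=1$ in $\hat{R}_{un}$. By Theorem~\ref{main!thm1} the ring $\hat{R}_{un}$ is Gorenstein of codimension $6$ (Krull dimension $6$), and for general $f_1,f_2$ the two homogeneous elements $T_2-f_1,\,T_4-f_2$ form a regular sequence. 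Therefore $A/Q$ is a graded Gorenstein ring of codimension $6$, so its minimal free resolution has length exactly $6$, is self-dual, and has $C_0=A$ and $C_6=A(-d)$ for a single twist $d$.

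Next I would compute the resolution. The method is the one in the proof of \cite[Proposition~3.4]{NP1}: for the $4$-intersection (parallel $2\to 6$) format the graded Betti numbers are governed by an iterated mapping cone assembled from the Koszul complex of the codimension~$2$ complete intersection $I$ and the four unprojection ideals $J_1,\dots,J_4$. The total ranks of the free modules, namely $1,20,64,90,64,20,1$ for $C_0,\dots,C_6$, are fixed by the format and coincide with those in~(\ref{eq!resR1}) and~(\ref{eq!resR2}); only the internal twists change, and these I would obtain by propagating through the mapping cone the degrees of this construction: $\deg x_i=1$ for $i\in\{1,3,5,6\}$, $\deg c_3=\deg c_6=1$, $\deg x_2=\deg x_4=\deg T_3=2$ and $\deg T_1=3$ (equivalently, eliminating the two variables $T_2,T_4$ via the graded relations above from the NP1 resolution of $\hat{R}_{un}$). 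To certify that the complex so obtained is minimal and that its twists are exactly those displayed in~(\ref{eq!resR3}), I would, as in Proposition~\ref{constr!primeno2}, specialise the parameters $c_i,l_j$ to generic rational values, read off the Betti table in Macaulay2, and transfer the equality to the generic member of the family by upper semicontinuity of graded Betti numbers. I expect this reconciliation of the theoretical format with the explicit twists, together with the minimality claim, to be the main obstacle.

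The remaining two assertions are then immediate. The resolution has length $6$ with top term $C_6=A(-14)$, while the degrees of the ten variables of $A$ sum to $15$; by the standard formula for the canonical module of a graded Cohen--Macaulay quotient, $\omega_{A/Q}\cong (A/Q)(14-15)=(A/Q)(-1)$, in agreement with the self-duality of~(\ref{eq!resR3}). Finally, the Hilbert series is the alternating sum of the twists of~(\ref{eq!resR3}) divided by the denominator $(1-t)^6(1-t^2)^3(1-t^3)$ coming from the grading of $A$; collecting the contributions degree by degree gives the numerator
\[
1-t^2-10t^3+5t^4+24t^5-5t^6-28t^7-5t^8+24t^9+5t^{10}-10t^{11}-t^{12}+t^{14},
\]
which is the asserted expression, completing the proof.
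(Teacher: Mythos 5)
Your proposal is correct and follows essentially the same route as the paper: the resolution is obtained by the method of \cite[Proposition~3.4]{NP1} (an unprojection mapping-cone construction certified by a Macaulay2 Betti-table computation at a generic specialisation), the canonical module twist comes from the top shift $-14$ against the sum $15$ of the variable degrees, and the Hilbert series is read off from the resolution. Your additional observations (the identification $A/Q\cong\hat{R}_{un}/(T_2-f_1,T_4-f_2)$, self-duality, and the total Betti numbers $1,20,64,90,64,20,1$ shared with (\ref{eq!resR1}) and (\ref{eq!resR2})) are consistent elaborations rather than a different argument.
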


\begin{proof}
The computation of the  minimal graded free resolution  of $A/Q$ is based on the  method which is  described in the proof of  \cite[Proposition~3.4]{NP1}.
Using the minimal graded free resolution (\ref{eq!resR2}) of $A/Q$ and that the sum of the degrees of the variables is equal to $15$ we conclude that 
\[\omega_{A/Q}= A/Q(14-15)= A/Q(-1).\]
The last conclusion of Proposition \ref{prop!gradres3} follows easily from the resolution (\ref{eq!resR3}). 
\end{proof}

By  Propositions \ref{Prop!quasismooth3}, \ref{sing!specsing3} and \ref{prop!gradres3}, it follows that $X$ is a Fano $3$-fold.

\section* {Acknowledgements} 

\label{sec!acknowledgements}
I would like to thank Stavros Papadakis for important discussions and suggestions which have improved the present paper.  I benefited from experiments with the computer algebra programs Macaulay2~\cite{GS} and Singular~\cite{GPS01}. Part of this work is contained in my PhD Thesis,~\cite{PV1} carried out at the University of Ioannina, Greece. This work was financially supported by Horizon Europe ERC Grant number: 101045750 / Project acronym: HodgeGeoComb with principal investigator Karim Adiprasito, whom I warmly thank.

\end{document}